\newcommand{\fcal}{\ensuremath{\mathcal{F}}}
\newcommand{\Nbb}{\ensuremath{\mathbb{N}}}
\newcommand{\Zbb}{\ensuremath{\mathbb{Z}}}
\newcommand{\Rbb}{\ensuremath{\mathbb{R}}}
\DeclareMathOperator{\divv}{div} 
\begin{document}

\title*
{Some applications of the extended Bendixson-Dulac Theorem}
\author{Armengol Gasull and Hector Giacomini}
\institute{Armengol Gasull\at Departament de Matem\`{a}tiques. Universitat Aut\`{o}noma de
Barcelona, Edifici C 08193 Bellaterra, Barcelona. Spain,
\email{gasull@mat.uab.cat} \and Hector Giacomini \at Laboratoire de Math\'{e}matiques
et Physique Th\'{e}orique.
 Facult\'{e} des Sciences et Techniques. Universit\'{e} de Tours, C.N.R.S. UMR 7350. 37200 Tours.
France, \email{Hector.Giacomini@lmpt.univ-tours.fr}}
%
%
\maketitle

\abstract*{During the last years the authors have studied the number of limit
cycles of several families of planar vector fields. The common tool has been the
use of an extended version of the celebrated  Bendixson-Dulac Theorem.
 The aim of this work is to present an unified approach of some of these results,
 together with their corresponding proofs. We also provide several applications.}

\abstract{During the last years the authors have studied the number of limit
cycles of several families of planar vector fields. The common tool has been the
use of an extended version of the celebrated  Bendixson-Dulac Theorem.
 The aim of this work is to present an unified approach of some of these results,
 together with their corresponding proofs. We also provide several applications.}

\section{The Bendixson-Dulac Theorem}
\label{sec:1}

 Ivar Bendixson  and Henri Dulac are the fathers of the today known as Bendixson-Dulac
 Theorem. The classical version of this theorem appears in most textbooks on differential
 equations; see~\cite{DLA,Ye,Z2} with many applications.  Let us recall
it.
 Consider  a $\mathcal{C}^1$-planar differential system
 \begin{equation}\label{ee1}
\dot x=P(x,y),\quad \dot y=Q(x,y),
 \end{equation}
defined in some open simply connected subset $\mathcal{U}\subset\mathbb{R}^2$, and
set $X=(P,Q).$ Assume that  there exists a $\mathcal{C}^1$ function
$D:\mathcal{U}\to\mathbb{R}$, such that
\[
\left.\operatorname{div}\left(D\, X\right)\right|_{\mathcal{U}}=
\left.\dfrac{\partial (D(x,y)P(x,y))}{\partial x}+\dfrac{\partial
(D(x,y)Q(x,y))}{\partial y}\right|_{\mathcal{U}}\ge0\quad (\mbox{or}\quad \le0),
\]
 vanishing only on a set of zero Lebesgue measure
 . Then~ system \eqref{ee1} has no periodic orbits  contained in $\mathcal{U}$. This
function $D$ is usually called a {\it Dulac function} of the system.

This theorem has been extended to multiple connected regions, see for
instance~\cite{Che2,GGi,Lloyd,Yamato} obtaining  then a method for determining
upper bounds of the number of limit cycles in $\mathcal{U}$. In the next section we
recall this extension and present the proof given in~\cite{GGi}.

As we will see  this extension  can be used  if it is possible to find a suitable
function~$V$, a real number $s,$ and a  domain $\mathcal{U}\subset\mathbb{R}^2$
such that
\[ M=\left. \frac{\partial V}{\partial x}P+\frac{\partial V}{\partial y}Q
  +s\Big( \frac{\partial P}{\partial x}+ \frac{\partial Q}{\partial y}
  \Big)V\right|_{\mathcal{U}}\]
  does not change sign and vanishes  on a set of zero Lesbesgue measure.
  Moreover, the upper bound given by the method
  for the  number of limit cycles depends on the number and distribution of the ovals of
  $\{V(x,y)=0\}$ in~$\mathcal{U}.$

When all the involved functions $P,Q$ and $V$ are polynomials this approach relates
both parts of Hilbert's Sixteenth Problem. Recall that the first part deals with
the number and distribution of ovals of a real algebraic curve  in terms of its
degree while the second part asks to find an uniform bound of the number of limit
cycles of systems of the form~\eqref{ee1} when both polynomials have a given
degree; see~\cite{I,Wilson}.

Notice that the importance of the use of the Bedixson-Dulac results is that in many
cases they translate the problem of knowing the number of periodic solutions of a
planar polynomial differential equation to a problem of semi-algebraic nature: the
control of the sign of a  polynomial in a suitable domain.

Analogously to Lyapunov functions,  the first difficulty to apply these results is
 to find a suitable Dulac function. The problem of its existence,  in the basin of
attraction of critical points, is treated in \cite{cha}. A second  difficulty of
the method is to find a suitable region $\mathcal U$.

The aim of this paper is to present an unified point of view of some of the results
obtained by the authors in~\cite{GGi,GGi2,GGi3,GGiL}, together with some proofs.
These results give methods to find Dulac functions $D$, or equivalently functions
$V$ and values $s$, for which the corresponding expression $M$ is simple and so its
sign can be easily studied. We also apply the method to give an upper bound of the
number of limit cycles for several families of planar systems.

The Bendixson-Dulac approach has been extended in several directions: to prove
non-existence of periodic orbits in higher dimensions, see~\cite{F,LM}; to control
the number  of isolated  periodic solutions of some non-autonomous Abel
differential equations, see for instance~\cite{a,Che7}; to prove non-existence of
periodic orbits for some difference equations, see~\cite{MM}.

We can not end this introduction without talking about the contributions on the use
of Dulac functions    of our friend and colleague Leonid Cherkas, sadly recently
deceased. His important work in this subject started many years ago and arrives
until the actuality, continued by his collaborators, see for
instance~\cite{Che1,Che2,Che3,Che4,Che5,Che6,Che7} and the reference therein. In
fact, one of the main motivations for the fist author  to work in this direction
were the pleasant  conversations with him walking around the beautiful gardens of
the Beijing University in the summer of 1990.

\subsection{The Bendixson-Dulac Theorem for multiple connected regions}

 An open subset $\mathcal U$ of $ \mathbb{R}^2$  with smooth boundary, is said to be
$\ell${\it-connected}\, if its fundamental  group, $\pi_1(\mathcal{U})$ is $\Zbb*
\stackrel{(\ell)}{\cdots}*\Zbb,$ or in other words if $\mathcal U$ has $\ell$
holes. We will say that $\ell(\mathcal{U})=\ell$. We state and prove,
following~\cite{GGi}, the extension of the Bendixson-Dulac Theorem to more general
domains; see other proofs in~\cite{Che2,Lloyd,Yamato}. As usual,
$\langle\cdot,\cdot\rangle$ denotes the scalar product in $\mathbb{R}^2.$

\smallskip

\noindent{\bf{Extended Bendixson-Dulac Theorem.}} {\it Let $\mathcal{U}$ be an
$\ell$-connected open
 subset of $ \mathbb{R}^2$ with smooth boundary. Let $D\colon
\mathcal{U}\rightarrow \mathbb{R}$  be a $\mathcal{C}^1$ function such
that\begin{equation}\label{ebd}
    M:={\operatorname{div}}  (DX)=
\frac{\partial D}{\partial x}P+\frac{\partial D}{\partial y}Q +D\,( \frac{\partial
P}{\partial x}+ \frac{\partial Q}{\partial y} )= \langle \nabla D,X \rangle
+D\,{\operatorname{div}}  (X) \end{equation} does not change sign in $\mathcal U$
and vanishes only on a null measure Lebesgue set,  such that $\{M=0\} \cap\{D=0\}$
does not contain periodic orbits of (\ref{ee1}). Then the maximum number of
periodic orbits of (\ref{ee1}) contained in $\mathcal{U}$ is $\ell.$ Furthermore
each one of them is a hyperbolic limit cycle that does not cut
 $ \{D=0\}$ and its
stability  is given by the sign of $DM$ over it.}

\begin{proof}
Observe that $M|_{\{D=0\}}=\langle \nabla D,X \rangle|_{\{D=0\}} \ge0$ does not
change sign in $\mathcal{U}$. Since, by hypothesis, there are no periodic orbits
of (\ref{ee1}) contained in $\{M=0\} \cap\{D=0\}$, we have that the periodic
orbits of (\ref{ee1}) do not cut $\{D=0\}.$

If $\mathcal U$ is simply connected ($\ell=0$) then by the  Bendixson-Dulac Theorem
we have that (\ref {ee1}) has no periodic orbits in $\mathcal U$. We give now a
proof for an arbitrary $\ell.$ Assume that system (\ref {ee1}) has $\ell+1$
different periodic orbits $\gamma_i$, included in $\mathcal U$. These orbits induce
$\ell+1$ elements $\overline\gamma_i$ in the first homology group of $\mathcal{U},
H_1(\mathcal{U})=\Zbb \oplus\stackrel{(\ell)}{\cdots}\oplus\Zbb$. Since this group
has at most $\ell$ linearly independent elements it follows that there is a non
trivial linear combination of them giving $0\in H_1(\mathcal{U})$. Then
$\sum_{i=1}^{\ell+1}m_i\overline\gamma_i=0$, with $(m_1,\ldots,m_{\ell+1})\ne0$.

This last fact means that the curve $\sum_{i=1}^{\ell+1}m_i\gamma_i$ is the
boundary of a two cell $C$ for which Stokes Theorem can be applied. Then
$$\iint_C {\operatorname{div}}
(DX)=\int_{\sum_{i=1}^{\ell+1}m_i\gamma_i}\langle D\,X,\bold n\rangle.$$ Note that
the right hand term in this equality is zero because $DX$ is tangent to the curves
$\gamma_i$ and that the left one is non-zero by our hypothesis. This fact leads to
a contradiction. So $\ell$ is the maximum number of periodic orbits of (\ref {ee1})
in $\mathcal U$.

Let us prove their hyperbolicity. Fix one periodic orbit $\gamma=\{(x(t),y(t)),
t\in[0,T]\}\subset \mathcal{U},$ where $T$ is its period. Remember that
$\gamma\cap\{D=0\}=\emptyset.$ In order to study its hyperbolicity and stability
we have to compute $\int_0^T{\operatorname{div}}    X(x(t),$ $y(t))dt,$ and to
prove that it is not zero. This fact follows by integrating the equality $$
{\operatorname{div}} X= \frac{\partial P}{\partial x}+ \frac{\partial Q}{\partial
y}=\frac{{\operatorname{div}} (D\,X)}D-\frac{
 \frac{\partial D}{\partial x}P+\frac{\partial D}{\partial y}Q}D, $$
because the last term of the  right hand side of the above equality coincides with
$\frac{d}{dt}\ln| D(x(t),y(t))|.$\qed
\end{proof}

To apply the above theorem,  we consider  a function $D(x,y)$ of the form
$|V(x,y)|^{m}$ where $V$ is a smooth function in two variables in $ \mathbb{R}^2$
and $m$ is a real number.

Before giving the result for this particular choice of $V$ we introduce some more
notation. Given an open subset $ \mathcal{W}$ with smooth boundary and a smooth
function $V:{\mathcal{W}} \to \mathbb{R}$ we denote by $\ell( \mathcal{W},V)$ the
sum of $\ell(\mathcal{U})$ where $\mathcal U$ ranges over all the connected
components of ${\mathcal{W}}\setminus\{V=0\}.$ Finally, we denote by
$c({\mathcal{W}},V)$ the number of closed ovals of $\{V=0\}$ contained in
${\mathcal{W}}.$ See Figure~\ref{fig:1} for an illustration of these definitions.

\begin{figure}[h]
\begin{picture}(0,0)%
\includegraphics[scale=0.8]{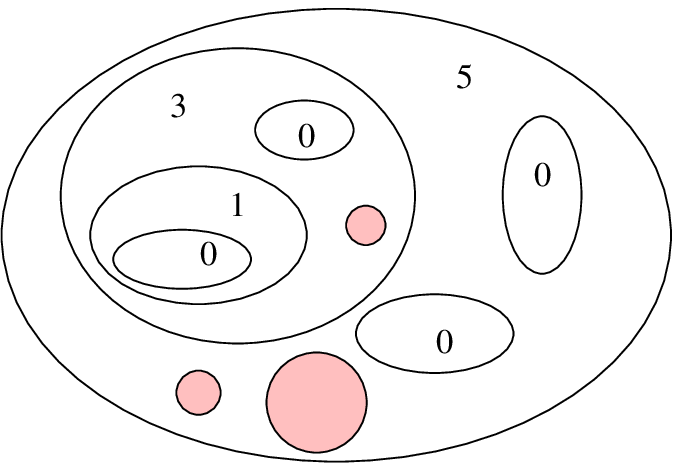}%
\end{picture}%
\setlength{\unitlength}{3315sp}
\begingroup\makeatletter\ifx\SetFigFont\undefined%
\gdef\SetFigFont#1#2#3#4#5{%
  \reset@font\fontsize{#1}{#2pt}%
  \fontfamily{#3}\fontseries{#4}\fontshape{#5}%
  \selectfont}%
\fi\endgroup%
\begin{picture}(3076,2084)(488,-1508)
\put(3151,344){\makebox(0,0)[b]{\smash{{\SetFigFont{10}{12.0}{\rmdefault}{\mddefault}{\updefault}{\color[rgb]{0,0,0}$\mathcal{W}$}%
}}}}
\put(1576,-376){\makebox(0,0)[b]{\smash{{\SetFigFont{8}{12.0}{\rmdefault}{\mddefault}{\updefault}{\color[rgb]{0,0,0}1}%
}}}}
\put(2971,-241){\makebox(0,0)[b]{\smash{{\SetFigFont{8}{12.0}{\rmdefault}{\mddefault}{\updefault}{\color[rgb]{0,0,0}0}%
}}}}
\put(2521,-1006){\makebox(0,0)[b]{\smash{{\SetFigFont{8}{12.0}{\rmdefault}{\mddefault}{\updefault}{\color[rgb]{0,0,0}0}%
}}}}
\put(1441,-601){\makebox(0,0)[b]{\smash{{\SetFigFont{8}{12.0}{\rmdefault}{\mddefault}{\updefault}{\color[rgb]{0,0,0}0}%
}}}}
\put(1891,-61){\makebox(0,0)[b]{\smash{{\SetFigFont{8}{12.0}{\rmdefault}{\mddefault}{\updefault}{\color[rgb]{0,0,0}0}%
}}}}
\put(1306, 74){\makebox(0,0)[b]{\smash{{\SetFigFont{8}{12.0}{\rmdefault}{\mddefault}{\updefault}{\color[rgb]{0,0,0}3}%
}}}}
\put(2611,209){\makebox(0,0)[b]{\smash{{\SetFigFont{8}{12.0}{\rmdefault}{\mddefault}{\updefault}{\color[rgb]{0,0,0}5}%
}}}}
\end{picture}%
\hspace{15mm} \sidecaption \caption{Open set $\mathcal{W}$ with
$\ell(\mathcal{W})=3$. The grey cercles are holes in $\mathcal{W}$  and the thick
lines correspond to $\{V=0\}.$ The numbers
 displayed are the values $\ell(\mathcal{U})$ for each connected component $\mathcal U$ of
$\mathcal{W}\setminus\{V=0\}.$ For this example $c(\mathcal{W},V)=6$ and
$\ell(\mathcal{W},V)=9.$}
\label{fig:1}       
\end{figure}

\begin{corollary}\label{B}
Assume that there exist a real number $s$ and an analytic function $V$ in $
\mathbb{R}^2$ such that
\begin{equation*}
    M_s:=  \frac{\partial V}{\partial x}P+\frac{\partial V}{\partial y}Q
  +s\Big( \frac{\partial P}{\partial x}+ \frac{\partial Q}{\partial y} \Big)V=
  \langle \nabla V,X \rangle +sV{\operatorname{div}}   (X)
\end{equation*} does not change sign in an open region $\mathcal{W}\subset \Rbb^2$
with regular boundary and vanishes only in a null measure Lebesgue set. Then the
limit cycles of system (\ref{ee1}) are either totally contained in
$\mathcal{V}_0:=\{V=0\},$ or do not intersect  $\mathcal{V}_0$.

Moreover the number of limit cycles contained in  $\mathcal{V}_0$ is at most
$c(\mathcal{W},V)$ and the number $N$ of limit cycles that do not intersect
$\mathcal{V}_0$ satisfies
\[
N\le \begin{cases} \ell(\mathcal{W})\quad &\mbox{if}\quad s>0,\\
0&\mbox{if}\quad s=0,\\
\ell(\mathcal{W},V)\quad &\mbox{if}\quad s<0,\\
\end{cases}
\] Furthermore for any $s\ne0$ the limit cycles of this second type are
hyperbolic.

\end{corollary}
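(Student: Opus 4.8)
The plan is to derive the whole statement from the Extended Bendixson-Dulac Theorem by using, for $s\neq0$, the Dulac function $D=|V|^{1/s}$. The starting point is the identity, valid on $\{V\neq0\}$,
\[
\operatorname{div}(DX)=\frac{1}{s}\,|V|^{1/s-1}\operatorname{sgn}(V)\,M_s,
\]
obtained by a direct computation using $\nabla|V|^{1/s}=\frac1s|V|^{1/s-1}\operatorname{sgn}(V)\nabla V$. Thus $\operatorname{div}(DX)$ vanishes only on a null set and, on each of the two sides of $\mathcal V_0=\{V=0\}$, has a constant sign equal to $\operatorname{sgn}(V)$ times the fixed sign of $M_s/s$. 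These two signs are opposite, so $D$ cannot be applied directly on all of $\mathcal W$; this is the reason the cases $s>0$ and $s<0$ behave differently.

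First I would prove the dichotomy. On $\mathcal V_0$ one has $M_s=\langle\nabla V,X\rangle=\dot V$, which therefore keeps a constant sign along $\mathcal V_0$. If a periodic orbit $\gamma$ met $\mathcal V_0$ without being contained in it, then $t\mapsto V(\gamma(t))$ would be periodic, nonconstant and vanish somewhere, hence cross $0$ with $\dot V$ of both signs, forcing $M_s$ to change sign on $\mathcal V_0$; the analyticity of $V$ rules out the remaining degenerate tangential contacts. Consequently every limit cycle is either contained in $\mathcal V_0$ or disjoint from it, and a cycle of the first kind is an oval of $\{V=0\}$, each oval carrying at most one orbit, which gives the bound $c(\mathcal W,V)$. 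For cycles of the second kind with $s<0$, on every connected component $\mathcal U_j$ of $\mathcal W\setminus\mathcal V_0$ the function $D=|V|^{1/s}$ is $\mathcal C^1$ and $\operatorname{div}(DX)$ has constant sign, so the Extended Bendixson-Dulac Theorem applied to each $\mathcal U_j$ bounds the cycles inside it by $\ell(\mathcal U_j)$; summing yields $N\le\ell(\mathcal W,V)$, and each such cycle is hyperbolic because $D\neq0$ on it.

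The main point is the sharper bound $N\le\ell(\mathcal W)$ when $s>0$, which requires working on all of $\mathcal W$ at once. Now $1/s>0$, so $D=|V|^{1/s}$ is continuous on $\mathcal W$ and vanishes exactly on $\mathcal V_0$. Assume there were $\ell(\mathcal W)+1$ limit cycles $\gamma_i$ disjoint from $\mathcal V_0$; since $H_1(\mathcal W)$ has rank $\ell(\mathcal W)$, a nontrivial combination satisfies $\sum_i m_i\gamma_i=\partial C$ for a $2$-chain $C\subset\mathcal W$. I would apply the divergence theorem to $C\cap\{V>0\}$ and to $C\cap\{V<0\}$ separately: on the portions of the boundary contained in $\mathcal V_0$ the flux $\langle DX,\mathbf n\rangle$ vanishes because $D=0$ there, and on the arcs of the $\gamma_i$ it vanishes because $DX$ is tangent to the orbits, so both integrals $\iint_{C\cap\{V>0\}}\operatorname{div}(DX)$ and $\iint_{C\cap\{V<0\}}\operatorname{div}(DX)$ equal zero. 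Since on each side $\operatorname{div}(DX)$ has constant sign and vanishes only on a null set, each piece must have zero measure, contradicting that $C$ is a genuine $2$-cell. Hyperbolicity follows as before by applying the theorem on the component of $\mathcal W\setminus\mathcal V_0$ that contains each cycle.

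Finally, for $s=0$ the function $|V|^{1/s}$ is undefined and I would argue directly: $M_0=\dot V$ has constant sign and vanishes only on a null set, so $V$ is monotone along orbits; a periodic orbit forces $\oint\dot V\,dt=0$, hence $\dot V\equiv0$ on it and $V$ is constant there, and if that constant is nonzero the cycle lies in a regular level curve approached from both sides, where a single sign of $\dot V$ is incompatible with it being attracting or repelling on both sides. Thus $N=0$. I expect the genuine obstacle to be the $s>0$ case: the cancellation on $\mathcal V_0$ coming from $D|_{\mathcal V_0}=0$ is exactly what upgrades the component-wise count $\ell(\mathcal W,V)$ to the global count $\ell(\mathcal W)$, and some care is needed to justify the split divergence argument up to the boundary $\mathcal V_0$, i.e.\ the integrability of $\operatorname{div}(DX)=\tfrac1s|V|^{1/s-1}M_s$ near $\mathcal V_0$, which does hold since $1/s>0$.
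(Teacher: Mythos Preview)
Your argument is correct and follows essentially the same route as the paper: take $D=|V|^{1/s}$, read off the dichotomy from $M_s|_{\mathcal V_0}=\langle\nabla V,X\rangle$, apply the Extended Bendixson--Dulac Theorem on each connected component of $\mathcal W\setminus\mathcal V_0$ when $s<0$, and work globally on $\mathcal W$ when $s>0$ using that $D$ extends across $\mathcal V_0$. Your handling of the case $s>0$ (splitting the Stokes integral over $C\cap\{V>0\}$ and $C\cap\{V<0\}$ and using $D|_{\mathcal V_0}=0$ to kill the interior boundary flux) is in fact more careful than the paper's own proof, which simply remarks that for $s>0$ the function $D$ is ``well defined in the whole plane'' and leaves the details implicit.
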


\begin{proof} First observe that since  $M_s$ does not change sign we have
that  on the analytic curves $\mathcal{V}_0$,  $ \langle \nabla V,X \rangle $ does
not change sign. Therefore these curves are either solutions of (\ref{ee1}) or
curves crossed by the flow generated by (\ref{ee1}) in just one direction. Hence
all limit cycles in $\mathcal{W}$ are either contained in the connected components
of $ \mathcal{W}\setminus \mathcal{V}_0$ or in $\mathcal{V}_0$. This fact implies
the first assertions of the Theorem. In order to bound the number of limit cycles
of (\ref{ee1}) we apply the extended Bendixson-Dulac Theorem  to each one of the
connected components $\mathcal U$ of $ \mathcal{W}\setminus \mathcal{V}_0.$ The
fact that when $D=|V|^{m},$
$$
     {\operatorname{div}}   (D\,X)=\langle \nabla D,X \rangle +D{\operatorname{div}}
       (X)=
    \mbox{sign}(V)m|V|^{m-1}\left[\langle \nabla V,X \rangle +\frac1m
     V{\operatorname{div}}    (X)\right],
$$ gives  the theorem  by taking $m=1/s.$ Observe that the difference between the
cases $s>0$ and $s<0$ comes from the fact that in the first case the function $D$
is well defined in the whole plane. For the case $s=0$ the proof is easier because
$M_0=dV/dt=\langle \nabla V,X \rangle.$\qed
\end{proof}

The above corollary shows that the  study of the functions
\begin{svgraybox}
\begin{equation}\label{ms}
    M_s=  \frac{\partial V}{\partial x}P+\frac{\partial V}{\partial y}Q
  +s\Big( \frac{\partial P}{\partial x}+ \frac{\partial Q}{\partial y} \Big)V
\end{equation}
\end{svgraybox}
\noindent gives a  tool for controlling the number of limit cycles of
system~\eqref{ee1}. As we will see this approach turns out to be useful for  many
families of planar vector fields. This function is  also often used in the quoted
works of Cherkas and his coauthors.

\subsection{Some simple examples}

As  paradigmatic examples we will give short and easy proofs  of the non-existence
of limit cycles for a generalization of the Lotka-Volterra system  and   of the
uniqueness of the limit cycle of the van der Pol system. The first  one is a
folklore prove and the second one is given by Cherkas, see \cite[p. 105]{Chi}. We
will prove  also a more general non-existence result for Kolmogorov systems.

\subsubsection{Non-existence  of  limit cycles for some predator-prey systems}

Consider the following extension of the celebrated Lotka-Volterra system
\begin{equation}\label{lv}
\dot x=x(ax+by+c),\quad \dot y=y(dx+ey+f),
\end{equation}
where all the parameters are real numbers. It appears in most texts books of
mathematical ecology. By uniqueness of solutions it is clear that if it  has
periodic orbits then they do not intersect the coordinate axes. By making the
change of variables $x\to\pm x,$ $y\to\pm y,$ if necessary, we can restrict our
attention to the first quadrant $\mathcal U$ and prove that the system has no
periodic orbit in it. To do this consider the Dulac function $D(x,y)=x^Ay^B,$ where
the real numbers $A$ and $B$ have to be determined. Then the function $M$ appearing
in~\eqref{ebd} is
\begin{align*}
M(x,y)&=\langle \nabla D(x,y),X(x,y) \rangle +D(x,y)\,\operatorname{div}X(x,y) \\
&= x^A y^B\left((aA+dB+2a+d)x+(bA+eB+2e+b)y+(cA+fB+c+f)  \right).
\end{align*}
When  $ae-bd\ne0$ we can solve the linear system obtained vanishing the
coefficients of $x$ and $y$ with unknowns $A$ and $B$. Call the solution $A=\alpha$
and $B=\beta$. Then
\[
M(x,y)=\frac{abf+ced-aef-ace}{ae-bd}x^{\alpha}y^{\beta}:=R x^{\alpha}y^{\beta}.
\]
When $R\ne0$ we can apply the Bendixson-Dulac Theorem and since $\mathcal U$ is
simply connected ($\ell(\mathcal{U})=0$) the system has no limit cycles. When $R=0$
then it $x^{\alpha}y^{\beta}$ is an integrating factor. Hence the system is
integrable and its first integral  is smooth in $\mathcal U$. Thus it can not have
isolated periodic orbits, i.e. it has no limit cycles. This case includes  the
famous Lotka-Volterra system. Recall that it  has a center in $\mathcal U$,
surrounded by periodic orbits.

When  $ae-bd=0$ then either the linear system $ax+by+c=0,$ $dx+ey+f=0,$ with
unknowns $x$ and $y$ has no solutions or its solutions are  either the full plane
 or a whole  line. In the first case the only critical points of system~\eqref{lv}
are on the axes, so the system can not have  periodic orbits. Otherwise it is
either the trivial system $\dot x=0,\, \dot y=0$ or is a reparameterization of the
simple system $\dot x=g x,\, \dot y= h y,$ for some real numbers $g,h$, which
clearly can not have periodic orbits either.

\subsubsection{Non-existence  of  limit cycles for a class of Kolmogorov systems}

Following~\cite{a2} we  give a  non-existence criterion  for a family of
Kolmogorov systems. This result can be applied to the  Gause-type systems
considered in \cite{Moreira} or to the systems studied in~\cite{siam}.

\begin{proposition}\label{Thm-Mas2}
Consider the $\mathcal{C}^1$-system
\begin{equation}\label{ec-masgeneral2}
\dot x=x\left(g_0(x)+g_1(x)y\right),\quad \dot
y=y\left(h_0(x)+h_1(x)y+h_2(x)y^2\right),
\end{equation}
for  $x\ge 0$, $y\ge 0$. For any $\lambda\in\mathbb {R}$ define the functions:
\begin{align*}
S_\lambda(x)&=x[g'_0(x)g_1(x)-g_0(x)g'_1(x)]+\lambda
h_0(x)g_1(x)-(1+\lambda)g_0(x)h_1(x),\\
T_\lambda(x)&=(2+\lambda) h_2(x)g_1(x).
\end{align*}
Let $\mathcal{I}\subset\mathbb{R}^+$ be an open interval. Assume that there exists
a value of $\lambda$ such that $S_\lambda(x)T_\lambda(x)\ge0$, for all $x\in
\mathcal{I}$, and all its zeroes  are isolated. Then system \eqref{ec-masgeneral2}
does not have periodic orbits  in the strip
$\mathcal{U}=\mathcal{I}\times(0,+\infty)$.
\end{proposition}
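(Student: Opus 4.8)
The plan is to apply the Extended Bendixson--Dulac Theorem directly on the strip $\mathcal U=\mathcal I\times(0,+\infty)$, which is simply connected, so $\ell(\mathcal U)=0$ and the theorem will give \emph{no} periodic orbits once a suitable Dulac function is produced. I would look for $D$ of the separated form $D(x,y)=\mu(x)\,y^{\lambda-1}$, with a positive weight $\mu(x)$ to be fixed so as to cancel the unwanted middle term in $\operatorname{div}(DX)$. Writing $P=x(g_0+g_1y)$ and $Q=y(h_0+h_1y+h_2y^2)$ and expanding, $\operatorname{div}(DX)$ equals $\mu\,y^{\lambda-1}$ times a quadratic in $y$; the exponent $\lambda-1$ is precisely the one that matches the coefficient of $y^{2}$ with $T_\lambda=(2+\lambda)g_1h_2$ and the coefficient of $y^0$ with $S_\lambda$.

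First I would determine $\mu$ by forcing the coefficient of the middle term $y^{\lambda}$ to vanish. This is the linear ODE $x g_1\mu'=-\bigl(g_1+xg_1'+(\lambda+1)h_1\bigr)\mu$, whose solution
\[
\mu(x)=\frac{1}{x\,g_1(x)}\exp\!\left(-(\lambda+1)\int\frac{h_1(x)}{x\,g_1(x)}\,dx\right)
\]
is well defined and of constant sign wherever $g_1\neq0$. The core of the proof is then the algebraic identity
\[
\operatorname{div}(DX)=\frac{\mu(x)}{g_1(x)}\,y^{\lambda-1}\bigl(S_\lambda(x)+T_\lambda(x)\,y^{2}\bigr),
\]
which I would verify by checking that the two surviving coefficients reorganize, after multiplication by $g_1$, into exactly $S_\lambda$ and $T_\lambda$. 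The delicate part of this computation is seeing the Wronskian $g_0'g_1-g_0g_1'$ and the cross term $g_0h_1$ assemble into $S_\lambda$, which is what pins down both the power $\lambda-1$ and the factor $1/g_1$.

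With the identity in hand the sign is immediate: $\mu/g_1=\tfrac{1}{x\,g_1^{2}}\exp(\cdots)>0$ and $y^{\lambda-1}>0$ on $\mathcal U$, so $M=\operatorname{div}(DX)$ has the sign of $S_\lambda(x)+T_\lambda(x)y^{2}$. Under the hypothesis $S_\lambda T_\lambda\ge0$ this factor cannot vanish for $y>0$ except where $S_\lambda$ and $T_\lambda$ vanish together, since $S_\lambda+T_\lambda y^2=0$ with $y>0$ would force $S_\lambda/T_\lambda=-y^{2}<0$. Hence $M$ keeps one sign and its zero set is contained in the measure-zero union of the vertical lines over the isolated common zeros of $S_\lambda,T_\lambda$; moreover $\{D=0\}\cap\mathcal U=\emptyset$, so the condition on $\{M=0\}\cap\{D=0\}$ is vacuous. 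The Extended Bendixson--Dulac Theorem then yields at most $\ell(\mathcal U)=0$ periodic orbits in $\mathcal U$.

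The step I expect to be the real obstacle is not the identity but guaranteeing that $M$ does not change sign \emph{globally}, together with the behaviour at the zeros of $g_1$. The computation controls the sign of $S_\lambda+T_\lambda y^{2}$ only pointwise in $x$, so if $S_\lambda$ and $T_\lambda$ flip sign simultaneously at a common zero $x_0$ one must still argue that $M$ does not change sign across $x=x_0$ (and $\mu$ degenerates when $g_1(x_0)=0$). I would address this by restricting to the subintervals of $\mathcal I$ on which $g_1$ and $S_\lambda T_\lambda$ do not vanish, where $D$ is genuinely $\mathcal C^1$ and $M$ is strictly one-signed, observing that in the intended applications $S_\lambda$ and $T_\lambda$ are each of constant sign so these exceptional lines are absent, and otherwise discarding periodic orbits meeting those measure-zero lines by a separate continuity argument. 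Making this patching rigorous, rather than the algebra, is where the care is needed.
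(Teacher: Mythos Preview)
Your approach is exactly the paper's: the same Dulac function $D(x,y)=\mu(x)\,y^{\lambda-1}$ with $\mu$ solving the same first-order linear ODE, leading to the identical identity $\operatorname{div}(DX)=\dfrac{\mu(x)}{g_1(x)}\,y^{\lambda-1}\bigl(S_\lambda(x)+T_\lambda(x)y^{2}\bigr)$. The one thing the paper does that dissolves what you flag as the real obstacle is to handle the zeros of $g_1$ \emph{before} constructing $D$ rather than as a patching step: if $g_1(\bar x)=0$ then $\dot x|_{x=\bar x}=\bar x\,g_0(\bar x)$, so the line $x=\bar x$ is either invariant (when $g_0(\bar x)=0$) or without contact, hence no periodic orbit can cross it and one may work from the outset on a subinterval where $g_1\ne0$ and $D$ is genuinely $\mathcal C^1$.
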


\begin{proof} First, let us prove that  if the system has a limit cycle then it
can not intersect the set $\{(x,y)\,|\,x>0,g_1(x)=0\}.$ This holds because if
$\bar x>0$ is such that $g_1(\bar x)=0$ then either $x=\bar x$ is an invariant
line (i.e. also $g_0(\bar x)=0$) or it is a line without contact, i.e $\dot
x\,|_{x=\bar x}= \bar xg_0(\bar x)\ne0$. Hence, in the region
where~\eqref{ec-masgeneral2} can have periodic orbits we can always assume that
$g_1$ does not vanish.

Consider now the family of Dulac functions  $D(x,y)=y^{\lambda-1} Z(x),$ where
$\lambda$ is given in the statement and $Z$ is an unknown function. Computing the
function~\eqref{ebd},
\begin{align*}
M(x,y)&=\, \operatorname{div}\big(D(x,y)X(x,y)\big)\\
&=\big[\left(xg_0(x)Z(x)\right)'+\lambda h_0(x)Z(x)
+\left(\left(xg_1(x)Z(x)\right)'+(\lambda+1) h_1(x)Z(x)\right)y\\
&\qquad+\big((\lambda+2) h_2(x)Z(x)\big)y^2\big]
y^{\lambda-1}.\\
\end{align*}
The solutions of the differential equation
\begin{equation}\label{edo} \left(xg_1(x)Z(x)\right)'+(\lambda+1)
h_1(x)Z(x)=0
\end{equation}
are
\[
Z_{x_0}(x)=\frac{\exp\left[-(\lambda+1){\displaystyle\int_{x_0}^x}
\dfrac{h_1(s)}{sg_1(s)}\,ds \right]}{x g_1(x)},
\]
where $x_0>0$ is an arbitrary contant. By taking the Dulac function $\widetilde
D(x,y)=y^{\lambda-1}  Z_{x_0}(x),$ for a given $x_0>0$, and  taking into account
that $Z_{x_0}(x)$ satisfies~\eqref{edo}, we obtain after some computations that
\[
M(x,y)=\frac{Z_{x_0}(x)}{g_1(x)}\left(S_\lambda(x)+T_\lambda(x) y^2
\right)y^{\lambda-1}.
\]

Since on $\mathcal I$, $S_\lambda(x)T_\lambda(x)\ge0$ we have proved that $M$ does
not change sign in $\mathcal{U}=I\times(0,+\infty)$, which is simply connected, and
vanishes on a set of zero Lebesgue measure given by some vertical straight lines.
Hence, by the  Bendixson-Dulac Theorem, the result is proved. \qed\end{proof}

Observe that the function $S_\lambda(x)$ of Proposition~\ref{Thm-Mas2} can also be
written as
\[
S_\lambda(x)=g_1^2(x)\left[ x\left(\frac{g_0(x)}{g_1(x)}\right)'+\lambda
\frac{h_0(x)}{g_1(x)}-(1+\lambda)\frac{g_0(x)h_1(x)}{g_1^2(x)}\right].
\]
When $h_1(x)\equiv0$, it essentially coincides with the one given in the
non-existence criterion presented in \cite[Thm 4.1]{K}.

\subsubsection{Uniqueness of the limit cycle for the van der Pol
equation}\label{svdp}

 The second order van der Pol equation $ \ddot x
+\varepsilon(x^2-1)\dot x+ x = 0, $ can be written as the planar system
\begin{equation*}
\dot x = y, \qquad \dot y= -\varepsilon(x^2-1)y-x.
\end{equation*}
Taking $V(x,y)=x^2+y^2-1$ we obtain that the associated function $M_s$ given
in~\eqref{ms} is
\[
M_s(x,y)=-\varepsilon(x^2-1)(sx^2+(2+s)y^2-s).
\]
Choosing $s=-2$ we get that $M_{-2}(x,y)=2\varepsilon(x^2-1)^2$. So, for
$\varepsilon\ne0$,  this function does not change sign and vanishes on two straight
lines. Hence since, $\ell(\mathbb{R}^2,V)=1$ and $s<0$, by Corollary~\ref{B}, we
obtain the van der Pol system has at most one limit cycle, which when exists is
hyperbolic, and lies outside the unit circle. This approach does not provide the
existence of the limit cycle. The existence, for $\varepsilon\ne0$, can be obtained
studying the behavior of the flow at infinity.


\section{Control of  the function $M_s$}

To apply the Dulac method to concrete examples the main difficulty is to find a
suitable couple $s\in\mathbb{R}$ and $V$ and then control the sign of the function
$M_s$ given in~(\ref{ms}). Many times a good trick consists in trying to reduce the
question to a one variable problem. This approach is developed  in
Subsection~\ref{sub1} following \cite{GGi,GGi2}.

Another point of view is to work in polar coordinates. Then the control of the
corresponding function $M_s$ takes advantage of writing the functions as
polynomials of the radial component with coefficients depending periodically on
the angle. This approach has been followed in~\cite{GGi3} and some results are
presented in Subsection~\ref{sub2}.

\subsection{The function $M_s$ is reduced to a one variable function}\label{sub1}

\subsubsection{A first method}\label{ss1}

\begin{proposition}\label{MT}
Consider a $\mathcal{C}^1$ system of the form
\begin{equation}
\label{me}
  \dot x= p_0(x)+p_1(x)y=P(x,y),\quad
  \dot y= q_0(x)+q_1(x)y + q_2(x) y^2=Q(x,y),
\end{equation}
with $p_1(x)\not\equiv0.$ For each $s\in\Rbb$ and for each $n\in\Nbb$   it is
possible to associate to it  a $(n+1)$-parameter family of functions
$V_n(x,y;c_0,c_1,\ldots,c_{n}):=V_n(x,y)$ of the form
$$
V_n(x,y)=v_0(x)+v_1(x)y+v_2(x)y^2+\cdots +v_n(x)y^n,
$$
such that for each one of them the function~\eqref{ms},
\[
M_{s,n}(x)=\langle \nabla V_n,(P,Q) \rangle +s
     V_n \operatorname{div} (P,Q)\,
\]
is a function only of the $x$-variable.
\end{proposition}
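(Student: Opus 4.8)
The plan is to expand $M_{s,n}$ directly as a polynomial in $y$ and to impose that every coefficient of a positive power of $y$ vanish. Writing $\operatorname{div}(P,Q)=d_0(x)+d_1(x)\,y$ with $d_0=p_0'+q_1$ and $d_1=p_1'+2q_2$, and substituting $V_n=\sum_{k=0}^n v_k(x)\,y^k$ into $M_{s,n}=P\,\partial_x V_n+Q\,\partial_y V_n+s\,V_n\operatorname{div}(P,Q)$, one gets a polynomial $\sum_{j=0}^{n+1}m_j(x)\,y^j$ of degree $n+1$ in $y$: the degree is forced on one side by $\deg_y P=1$ against $\deg_y \partial_x V_n=n$, and on the other by $\deg_y Q=2$ against $\deg_y \partial_y V_n=n-1$. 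Requiring that $M_{s,n}$ depend on $x$ alone is exactly requiring $m_j\equiv 0$ for $j=1,\dots,n+1$, a system of $n+1$ conditions on the $n+1$ unknown functions $v_0,\dots,v_n$.

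Collecting terms (with the convention $v_k\equiv 0$ for $k<0$ or $k>n$), the coefficient of $y^j$ is
\[
m_j=p_1\,v_{j-1}'+\big[(j-1)q_2+s\,d_1\big]v_{j-1}+p_0\,v_j'+\big[j\,q_1+s\,d_0\big]v_j+(j+1)\,q_0\,v_{j+1}.
\]
The structural point I would stress is that, read as an equation for the lowest-index unknown $v_{j-1}$ appearing in it, the relation $m_j=0$ is a first-order linear ODE whose only derivative term in $v_{j-1}$ is $p_1\,v_{j-1}'$; the remaining pieces $p_0 v_j'$, $v_j$ and $v_{j+1}$ involve only indices $\geq j$. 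Hence the system is triangular: $m_{n+1}=0$ reads $p_1\,v_n'+[n\,q_2+s\,d_1]v_n=0$, a homogeneous linear ODE for $v_n$ alone, while for $j=n,n-1,\dots,1$ the equation $m_j=0$ is an inhomogeneous linear ODE for $v_{j-1}$ whose right-hand side is assembled from the already-computed $v_j,v_{j+1}$.

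I would then solve these equations successively, determining $v_n$ first and descending to $v_0$. On any interval where $p_1(x)\neq0$ each ODE can be normalised by dividing through by $p_1$ and integrated with an integrating factor, its general solution carrying exactly one free constant; labelling these $c_n,c_{n-1},\dots,c_0$ yields the advertised $(n+1)$-parameter family $V_n(x,y;c_0,\dots,c_n)$. By construction $m_1=\cdots=m_{n+1}=0$, so $M_{s,n}=m_0=p_0\,v_0'+s\,d_0\,v_0+q_0\,v_1$ is indeed a function of $x$ only. The main obstacle is organisational rather than conceptual: one must pin down the formula for $m_j$ with correct bookkeeping of the index shifts produced by $\partial_y$ (which lowers the $y$-degree) as against multiplication by $P$, $Q$ and $\operatorname{div}(P,Q)$ (which raise it), and one must invoke $p_1\not\equiv0$ to ensure the leading coefficient of every ODE is not identically zero, so that away from the isolated zeros of $p_1$ the $v_k$ are honest $\mathcal{C}^1$ solutions and the construction goes through.
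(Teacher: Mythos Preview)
Your proof is correct and follows essentially the same route as the paper: expand $M_{s,n}$ as a polynomial in $y$, observe that the resulting system of vanishing conditions is triangular in the unknowns $v_n,\dots,v_0$ with each step a first-order linear ODE whose leading coefficient is $p_1$, and solve from the top down, picking up one integration constant at each level. Your bookkeeping is in fact slightly tidier than the paper's (the use of $d_0,d_1$ and a single formula for $m_j$ with the convention $v_k\equiv0$ outside range), and you make explicit the role of $p_1\not\equiv0$, which the paper uses implicitly but does not comment on.
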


\begin{proof}
Direct computations give
\begin{align*}
& \langle \nabla V_n,(P,Q) \rangle +s
     V_n \operatorname{div} (P,Q)=\\ &\hspace*{1cm}=\left[\left\{\left(sp'_1+2sq_2+nq_2\right)v_n +
p_1v'_n\right\}
   y^{n+1}+\fcal_n(v_n,v_{n-1})y^n+\right.\\
&\hspace{1cm}+\left.\left\{\fcal_{n-1}(v_{n-1},v_{n-2})+
nq_0(x)v_n(x)\right\}y^{n-1} +\cdots +\right.\\ &\hspace{1cm}+\left.
\left\{\fcal_1(v_1,v_0)+2q_0(x)v_2(x)\right\}y +
    \left\{(sp'_0+sq_1)v_0 + p_0v'_0 + q_0v_1\right\}\right],
\end{align*}
where for each $j=1,2\ldots,n,$
\begin{align*}
    \fcal_j(v_j,v_{j-1},v'_j,v'_{j-1})=&
    (sp'_{0}+sq_1+jq_1)v_j(x)+\\&  p_0v'_j(x)+(sp'_1+2sq_2+(j-1)q_2))v_{j-1}(x)+
    p_1v'_{j-1}(x).
\end{align*}

From the above expressions we can obtain a $1$-parameter family of functions
$v_n^*(x;c_{n}):=v_n^*(x)$ such that the coefficient of $y^{n+1}$ vanishes, by
solving a linear first order ordinary differential equation. Once we have $v_n^*$,
from $\fcal_n(v_n^*,v_{n-1})=0$ we get $v_{n-1}^*(x;c_n,c_{n-1}):=v_{n-1}^*(x)$
and so on until we have found $v_n^*, v_{n-1}^*,\ldots, v_0^*$. Finally, we obtain
\[
   \langle \nabla V_n,(P,Q) \rangle +sV_n  \operatorname{div}(P,Q)=
    \left[(sp'_0+sq_1)v_0^* + p_0(v_0^{*})'
      + q_0v_1^*\right]= M_{s,n}(x),
\]
as we wanted to prove.\qed
\end{proof}

\begin{corollary}\label{corolari}
Consider the generalized Li\'enard system
\[
\dot x=y-F(x):=P(x,y),\quad\, \dot y= -g(x):=Q(x,y).\]
 If we take
\[
V_2(x,y)=\left(\frac{s(s+1)}2(F(x))^2+c_1sF(x)+2G(x)+c_0\right)+(s F(x)+c_1)y+y^2,
\]
where $G(x)=\int_0^xg(z)\,dz,$ then
\begin{align*}
M_{s,2}(x)&=\langle \nabla V_2,(P,Q) \rangle +s
     V_2\divv
     (P,Q)\\&=-\frac{s(s+1)(s+2)}2(F(x))^2F'(x)-s(s+1)c_1F(x)F'(x)\\
&-(s+2)g(x)F(x)-2sF'(x)G(x)-sc_0F'(x)-c_1g(x).
\end{align*}
In particular, for $s=-1$ we have
\[
V_2(x,y)=\left(-c_1F(x)+2G(x)+c_0\right)+( -F(x)+c_1)y+y^2,\] and
\[M_{-1,2}(x)= 2F'(x)G(x)+c_0F'(x)-g(x)F(x)-c_1g(x).
\]
\end{corollary}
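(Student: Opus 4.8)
The plan is to apply Proposition~\ref{MT} with $n=2$ directly to the given Li\'enard system, since the stated $V_2$ and the claimed $M_{s,2}$ are precisely the output of the algorithm in that proof. First I would record the structural data by matching~\eqref{me}: here $p_0(x)=-F(x)$, $p_1(x)=1$, $q_0(x)=-g(x)$ and $q_1\equiv q_2\equiv 0$, so that $\divv(P,Q)=P_x+Q_y=-F'(x)$. Writing $V_2=v_0(x)+v_1(x)y+v_2(x)y^2$ with $v_2\equiv1$, I would expand
\[
M_{s,2}=\langle \nabla V_2,(P,Q)\rangle+sV_2\,\divv(P,Q)
\]
and collect it as a polynomial in $y$.

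The core of the argument is forcing the coefficients of $y^2$ and $y$ to vanish, exactly as in the proof of Proposition~\ref{MT}. The coefficient of $y^2$ is $v_1'-sF'$, whose vanishing gives the first-order linear equation $v_1'=sF'$ and hence $v_1=sF+c_1$. Substituting this, the coefficient of $y$ becomes $v_0'-v_1'F-2g-sF'v_1$, and setting it to zero yields
\[
v_0'=s(s+1)F'F+2g+sc_1F',
\]
which integrates to $v_0=\tfrac{s(s+1)}{2}F^2+sc_1F+2G+c_0$, with $G(x)=\int_0^x g(z)\,dz$. These are precisely the components of the $V_2$ displayed in the statement, confirming that the proposed family is the one produced by the construction.

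It then remains to evaluate the surviving constant term $M_{s,2}(x)=-v_0'F-gv_1-sF'v_0$. Plugging in the expressions for $v_0$, $v_0'$ and $v_1$ and grouping by the monomials $F^2F'$, $gF$, $FF'$, $F'G$, $F'$ and $g$ produces the claimed formula. The only step needing attention is the coefficient of $F^2F'$, where the two contributions $-s(s+1)$ (from $-v_0'F$) and $-\tfrac{s^2(s+1)}{2}$ (from $-sF'v_0$) combine to $-\tfrac{s(s+1)(s+2)}{2}$; the remaining groupings are routine bookkeeping. Finally, the $s=-1$ case follows by direct substitution, the factor $s(s+1)=0$ annihilating the first two terms and leaving $M_{-1,2}(x)=2F'G+c_0F'-gF-c_1g$. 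I do not anticipate a genuine obstacle here: the result is a verification, and the whole difficulty is organizing the collection of terms so that the cubic factor $s(s+1)(s+2)$ emerges correctly.
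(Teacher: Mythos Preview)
Your proposal is correct and is exactly the intended argument: the paper states the corollary without a separate proof, as a direct specialization of Proposition~\ref{MT}, and your computation simply carries out that algorithm with $p_0=-F$, $p_1=1$, $q_0=-g$, $q_1=q_2=0$ and $v_2\equiv1$. The intermediate identifications of the $y^2$ and $y$ coefficients, the resulting $v_1$ and $v_0$, and the final collection of terms (including the combination yielding the $s(s+1)(s+2)/2$ factor) are all accurate.
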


As an application of the above results   we prove here the uniqueness and
hyperbolicity of the limit cycle of a Li\'enard system with a rational $F.$ The
uniqueness (without proving the hyperbolicity) for this system was already proved
in \cite{Conti}; see also \cite{GG}. Other applications are given in~\cite{GGi}.

\begin{proposition}\label{rational}
The Li\'enard system
\begin{equation}\label{eq301}\dot x=y-F(x),\,\quad\dot
y= -x, \quad \mbox{with}\quad   F(x)=\frac{x(1-cx^2)}{(1+cx^2)}\end{equation} and
$c$ a real positive constant, has at most one limit cycle. Furthermore, when it
exists it is hyperbolic and unstable.
\end{proposition}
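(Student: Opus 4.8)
The plan is to view system~\eqref{eq301} as the particular generalized Li\'enard system of Corollary~\ref{corolari} with $g(x)=x$, so that $G(x)=x^2/2$, and to exploit the freedom in $s,c_0,c_1$ to make the associated one-variable function $M_{s,2}$ of a single sign. I would take $s=-1$, for which the corollary already records an explicit formula for $M_{-1,2}$, and then try the simplest possible constants $c_0=c_1=0$. This choice yields the Dulac-type function
\[
V_2(x,y)=x^2-F(x)\,y+y^2,
\]
together with $M_{-1,2}(x)=2F'(x)G(x)-g(x)F(x)=x^2F'(x)-xF(x)$.

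Next I would substitute $F(x)=x(1-cx^2)/(1+cx^2)$ and its derivative and simplify. I expect a substantial cancellation: writing everything over the common denominator $(1+cx^2)^2$, the terms of degrees $2$ and $6$ in $x$ should cancel, leaving the clean expression
\[
M_{-1,2}(x)=\frac{-4c\,x^4}{(1+cx^2)^2}.
\]
Because $c>0$, this is $\le 0$ on all of $\Rbb$ and vanishes only at the isolated point $x=0$, hence on a null measure set; so $M_{-1,2}$ does not change sign and the hypotheses of Corollary~\ref{B} are satisfied on $\mathcal{W}=\Rbb^2$ with $s=-1<0$.

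The key geometric point is to identify $\mathcal{V}_0=\{V_2=0\}$. Regarding $V_2$ as a quadratic in $y$ with discriminant $F(x)^2-4x^2$, I would verify the elementary inequality $|F(x)|<2|x|$ for every $x\ne0$; equivalently $|1-cx^2|/(1+cx^2)<2$, which holds for all $x\ne0$ when $c>0$. Thus the discriminant is strictly negative for $x\ne0$, the quadratic has no real root, and since its leading coefficient equals $1$ we conclude $V_2(x,y)>0$ for all $(x,y)\ne(0,0)$ while $V_2(0,0)=0$. Hence $\mathcal{V}_0$ is the single point $\{(0,0)\}$: there are no ovals, so $c(\Rbb^2,V_2)=0$ and no limit cycle lies in $\mathcal{V}_0$, while $\Rbb^2\setminus\{(0,0)\}$ is connected with exactly one hole, giving $\ell(\Rbb^2,V_2)=1$. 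By Corollary~\ref{B} the system therefore has at most one limit cycle, and since it is of the second type with $s\ne0$ it is automatically hyperbolic.

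For the stability I would appeal to the extended Bendixson-Dulac Theorem, whose stability is read off from the sign of $DM$ along the cycle, where $D=|V_2|^{1/s}=1/V_2>0$ and $M=\divv(DX)=-M_{-1,2}/V_2^2$. This gives $DM=-M_{-1,2}/V_2^3\ge0$, strictly positive away from $x=0$, equivalently $\int_0^T\divv X\,dt=\int_0^T(M/D)\,dt>0$, so the characteristic multiplier exceeds $1$ and the cycle is a repeller, i.e.\ unstable, as claimed. The only genuine obstacle I foresee is computational rather than conceptual: performing the cancellation that collapses $M_{-1,2}$ to its one-term form and checking the discriminant inequality. Once both are verified, the count of limit cycles and the stability sign follow at once from Corollaries~\ref{corolari} and~\ref{B} and the extended theorem.
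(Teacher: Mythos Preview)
Your proof is correct and follows essentially the same approach as the paper: the same function $V_2(x,y)=y^2-F(x)y+x^2$ with $s=-1$, the same one-variable reduction $M_{-1,2}(x)=-4cx^4/(1+cx^2)^2$, and the same discriminant analysis showing $\{V_2=0\}=\{(0,0)\}$ so that $\ell(\Rbb^2,V_2)=1$. The only difference is in the stability argument: the paper observes that the origin is the unique critical point and is stable (trace of the linearization is $-F'(0)=-1$), so the surrounding hyperbolic limit cycle must be unstable, whereas you compute the sign of $DM$ directly; both are valid and equally short.
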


\begin{proof}

We apply  Proposition~\ref{MT} and Corollary~\ref{corolari} with $s=-1,$ $n=2$ and
$V(x,y)$ given by the rational function:
\begin{equation*}\label{eq303}
  V(x,y) = y^2-F(x)y+x^2\,.
\end{equation*}
Then
\begin{equation*}
 M_{-1,2}(x)= \frac{-4c x^4}{(1+cx^2)^2}<0 \quad \mbox{for all } \quad x\not = 0\,.
\end{equation*}
The function $V(x,y)=0$ is a   second degree polynomial in the variable $y$, with
discriminant
\begin{equation*}
  \Delta= x^2\left(\left(\frac{1-cx^2}{1+cx^2}\right)^2-4\right)=
  -\frac{x^2(cx^2+3)(3cx^2+1)}{(1+cx^2)^2}<0 \qquad
  \mbox{for\quad all } x\not =0\,.
\end{equation*}
Hence  the set $\{V=0\}$  reduces to the origin. Therefore  $c(\mathbb{R}^2,V)=0$
and $\ell(\mathbb{R}^2,V)=1.$ From Corollary~{B} we conclude that system
\eqref{eq301} has at most one limit cycle. The origin is the only critical point
of this system and it is stable. Then, when the limit cycle exists it is
hyperbolic and unstable.\qed
\end{proof}

\subsubsection{A second method}

\begin{proposition}\label{lemabasic}
Consider a $\mathcal{C}^1$ system of the form
\begin{equation*}\label{general}
  \dot x= y=P(x,y),\quad
  \dot y= h_0(x)+h_1(x)y + h_2(x) y^2+y^3=Q(x,y),
\end{equation*}
and fix a positive integer number $n.$ There is a constructive procedure to obtain
an $(n+1)$-th order linear differential equation
\begin{equation}\label{lineal}
y^{(n+1)}(x)+ r_{n,n}(x)\,y^{(n)}(x)+\cdots+r_{n,1}(x)\,y'(x)+ r_{n,0}(x)\,y(x)=0,
\end{equation}
such that if  $y(x)=v_n(x)$ is any of its solutions, we can define a function
\begin{equation*}
V_n(x,y):=v_{n,0}(x)+v_{n,1}(x)y+v_{n,2}(x)y^2+\cdots +v_{n,n}(x)y^n,
\end{equation*}
where $v_{n,n}(x)=v_n(x)$ and $v_{n,i}(x), i=0\ldots n-1,$ are obtained from given
expressions involving $h_i(x), i=0,1,2, v_n(x)$    and their derivatives, such that
the corresponding function $M_s$  given in~\eqref{ms} with $s=-n/3$,
\[
M^{[n]}:=M_{-n/3}=\langle \nabla V_n,(P,Q) \rangle -\frac{n}{3} V_n
     \divv (P,Q),
\]
is a function only of the $x$-variable.
\end{proposition}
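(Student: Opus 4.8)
The plan is to follow the scheme of the proof of Proposition~\ref{MT}, expanding $M_s$ as a polynomial in $y$ and choosing the coefficients $v_{n,i}(x)$ of $V_n=\sum_{i=0}^nv_{n,i}(x)y^i$ so as to annihilate every power of $y$ but the constant one. Using $P=y$, $\divv(P,Q)=h_1+2h_2y+3y^2$, and the convention $v_{n,i}\equiv0$ for $i<0$ or $i>n$, a direct expansion gives that the coefficient of $y^k$ in $M_s$ is
\[
C_k=v_{n,k-1}'+(k+1)h_0v_{n,k+1}+(k+s)h_1v_{n,k}+\big((k-1)+2s\big)h_2v_{n,k-1}+\big((k-2)+3s\big)v_{n,k-2}.
\]
The decisive point, and the reason for the value $s=-n/3$, is the top coefficient $C_{n+2}=(n+3s)v_{n,n}$: unlike in Proposition~\ref{MT}, it carries no derivative of $v_{n,n}$ (since $P=y$ contributes no $x$-dependence), so it cannot be removed by solving a differential equation; instead it is killed algebraically by the scalar choice $s=-n/3$.

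With $s=-n/3$ fixed I would run the recursion from the top down. Note that the only derivative appearing in $C_k$ is $v_{n,k-1}'$, so solving $C_k=0$ for $v_{n,k-2}$ is purely algebraic, namely
\[
v_{n,k-2}=-\frac{1}{(k-2)+3s}\Big(v_{n,k-1}'+(k+1)h_0v_{n,k+1}+(k+s)h_1v_{n,k}+\big((k-1)+2s\big)h_2v_{n,k-1}\Big),
\]
and this is legitimate precisely because $(k-2)+3s=k-n-2\ne0$ for every $k$ in the range $2\le k\le n+1$ — again a consequence of the choice $s=-n/3$. Starting from the free leading coefficient $v_{n,n}=:v_n$, the equation $C_{n+1}=0$ gives $v_{n,n-1}=v_n'+\tfrac{n}{3}h_2v_n$, and then $C_k=0$ for $k=n,n-1,\ldots,2$ determines $v_{n,n-2},\ldots,v_{n,0}$ successively, each as an explicit differential expression in $v_n$ and the $h_i$. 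These are the ``given expressions'' of the statement.

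It remains to impose $C_1=0$. Since $v_{n,0},v_{n,1},v_{n,2}$ are by now fixed in terms of $v_n$, the relation $C_1=v_{n,0}'+2h_0v_{n,2}+(1+s)h_1v_{n,1}+2sh_2v_{n,0}=0$ is no longer an algebraic constraint but a differential equation for $v_n$ alone. An order count closes the argument: the recursion raises the differential order by one at each step, so $v_{n,n-j}$ has order $j$ in $v_n$ and $v_{n,0}$ has leading part $\tfrac1{n!}v_n^{(n)}$ (the factors $-1/((k-2)+3s)$ multiply out to $1/n!$); hence $v_{n,0}'$ supplies the term $\tfrac1{n!}v_n^{(n+1)}$ while every other summand of $C_1$ has order at most $n$ in $v_n$. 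Thus $C_1=0$ is a genuine linear differential equation of order $n+1$, which after normalization is the monic equation~\eqref{lineal}; and for any of its solutions $v_n$ the leftover constant term $C_0=h_0v_{n,1}+sh_1v_{n,0}$ is the announced $M^{[n]}$, depending on $x$ only.

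I expect the only real work to be bookkeeping: carrying out the expansion of $M_s$ correctly and checking the two nondegeneracy facts that the choice $s=-n/3$ secures, namely $(k-2)+3s\ne0$ throughout the recursion (so every coefficient is determined) and that the leading coefficient of the final equation does not vanish (so its order is exactly $n+1$).
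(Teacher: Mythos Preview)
Your proposal is correct and follows essentially the same route as the paper: expand $M_s$ as a polynomial in $y$, kill the coefficients from the top down by expressing each $v_{n,k}$ in terms of $v_{n,n}$ and its derivatives, and turn the last nontrivial vanishing condition into the linear ODE~\eqref{lineal}. The paper only writes out the case $n=2$ explicitly and does not spell out why $s=-n/3$ is forced (it simply starts with that value, so the top coefficient silently disappears from the expansion) nor why the resulting equation has order exactly $n+1$; your general recursion, your observation that $C_{n+2}=(n+3s)v_{n,n}$ explains the choice of $s$, your check that the denominators $k-2-n$ never vanish in the relevant range, and your order count via the leading part $v_{n,0}=\tfrac1{n!}v_n^{(n)}+\cdots$ make these points explicit and in fact give a cleaner, fully general argument.
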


\begin{proof} For sake
of simplicity we present the details of the proof only for the case $n=2.$  Also,
for sake of brevity and during this proof, when it appears a function of the $x$
variable that we do not want to specify we simply will write $*\,$.

Take
$V_2(x,y)=v_{2,0}(x)+v_{2,1}(x)y+v_{2,2}(x)y^2:=v_{0}(x)+v_{1}(x)y+v_{2}(x)y^2.$
Then
\begin{align*}
M^{[2]}=&\langle \nabla V_n,(P,Q) \rangle -\frac{2}{3} \divv (P,Q)V_2=\\
&\left( v'_2 \left( x \right) + \frac 2 3\,{v_2} \left( x \right) {h_2} \left( x
\right) -{v_1} \left( x
 \right)  \right) {y}^{3}+\\
& \left(v'_1\left( x \right)+  \frac4 3\,{v_2} \left( x \right) {h_1} \left( x
\right) -\frac1 3\,{v_1} \left( x \right) {h_2} \left( x
\right) -2\,{v_0} \left( x \right)  \right) {y}^{2}+\\
& \left( v'_0 \left( x \right)+\frac1 3\,{v_1} \left( x
 \right) {h_1} \left( x \right) -\frac4 3\,{h_2} \left( x \right) {
v_0} \left( x \right)  +2 \,{v_2}
\left( x \right) {h_0} \left( x \right)  \right) y+\\
& \left({v_1} \left( x \right) {h_0} \left( x \right)-\frac2 3\, {h_1} \left( x
\right) {v_0} \left( x \right)\right).
\\
\end{align*}

By choosing the following expressions for $v_0$ and $v_1$
\begin{align*}\label{gg}
v_0(x)&= \frac12\left(v'_1\left( x \right)+  \frac4 3\,{\it v_2} \left( x \right)
{\it h_1} \left( x \right) -\frac1 3\,{\it v_1}
\left( x \right) {\it h_2} \left( x \right)\right),\nonumber\\
v_1(x)&= v'_2 \left( x \right) + \frac 2 3\,{\it v_2} \left( x \right) {\it h_2}
\left( x \right),
\end{align*}
we get that the coefficients of $y^2$ and $y^3$ in $M^{[2]}$ vanish. Observe that
$v_1(x)=v_2'(x)+*\,v_2(x)$ and that $v_0(x)=v_2''(x)/2+*\,v_2'(x)+*\,v_2(x).$ Hence
if we substitute these equalities in the coefficient of $y$ in the expression of
$M^{[2]}$ we get that it writes as $ v_2'''(x)/2+*\,v_2''(x)+*\,v_2'(x)+*\,v_2(x).$
By imposing that this last expression be identically zero we get the linear
ordinary differential equation~(\ref{lineal}) given in the statement of the lemma.
Hence for these values of the functions $v_i, i=0,1,2$ the expression of $M^{[2]}$
is the function of one variable
\begin{equation*}\label{MM}
M^{[2]}(x)={ v_1} \left( x \right) { h_0} \left( x \right)-\frac2 3\, { h_1} \left(
x \right) { v_0} \left( x \right),
\end{equation*}
as we wanted to prove.\qed
\end{proof}

The advantage of the above result is that for each it $n$ gives the freedom to
choose any solution of a linear ordinary differential equation of order $n+1$. Then
using it we have to prove that the corresponding $M^{[n]}$ does not change sign.
This approach is used in~\cite{GGi3} to study the particular case $a=e=0$ of the
challenging question proposed in~\cite{CGM}:

\begin{svgraybox}
{\bf Question.} Consider the planar semi-homogeneous system
\[\dot x= ax+by,\quad
  \dot y= cx^3+dx^2y+exy^2+fy^3.\]
\noindent Is two its maximum number of limit cycles?
\end{svgraybox}

\subsection{Computations in polar coordinates}\label{sub2}

To work in polar coordinates we will need the expression of $M_s$ in terms of the
expression of the vector field~\eqref{ee1} in polar coordinates,
\begin{align}\label{eq3}
  \dot r&=R(r,\theta):=P(r\cos\theta,r\sin\theta)\cos\theta
  +Q(r\cos\theta,r\sin\theta)\sin\theta,\\
  \dot \theta&=\Theta(r,\theta):=\frac{1}{r}\Big(Q(r\cos\theta,r\sin\theta)\cos\theta
  -P(r\cos\theta,r\sin\theta)\sin\theta\Big).\nonumber
\end{align}

\begin{lemma}\label{le2} Let $\dot r=R(r,\theta), \, \dot
\theta=\Theta(r,\theta)$ be the expression~\eqref{eq3} of  system~\eqref{ee1} in
polar coordinates. Then the function $M_s$ given in~\eqref{ms} writes as

\begin{align}
  M_s&=  \frac{\partial V}{\partial x}P+\frac{\partial V}{\partial y}Q
  +s(\frac{\partial P}{\partial x}+\frac{\partial Q}{\partial y} )V\nonumber\\
&=\frac{\partial V}{\partial r}R+\frac{\partial V}{\partial \theta}\Theta
+s\left(\frac{\partial R}{\partial r} +\frac{\partial \Theta}{\partial
\theta}+\frac{R}{r}\right)V.\nonumber
\end{align}
\end{lemma}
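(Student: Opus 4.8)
The plan is to split $M_s$ into its two natural pieces, the advective term $\langle\nabla V,X\rangle=V_xP+V_yQ$ and the divergence term $s\,V\operatorname{div}(X)$, and to transform each separately into polar coordinates. For the advective term I would first record the chain-rule identities
\[
\frac{\partial}{\partial x}=\cos\theta\,\frac{\partial}{\partial r}-\frac{\sin\theta}{r}\frac{\partial}{\partial\theta},\qquad
\frac{\partial}{\partial y}=\sin\theta\,\frac{\partial}{\partial r}+\frac{\cos\theta}{r}\frac{\partial}{\partial\theta},
\]
obtained from $r=\sqrt{x^2+y^2}$, $\theta=\arctan(y/x)$. Applying them to $V$ and substituting into $V_xP+V_yQ$, the result regroups exactly as $V_r\,(P\cos\theta+Q\sin\theta)+V_\theta\,\tfrac1r(Q\cos\theta-P\sin\theta)$, which by the definitions in~\eqref{eq3} equals $V_rR+V_\theta\Theta$. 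Equivalently, this step is nothing but the coordinate-independence of $\dot V=\langle\nabla V,X\rangle$ along the flow, so it costs essentially nothing.

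The substance of the lemma is therefore the single identity
\[
\frac{\partial P}{\partial x}+\frac{\partial Q}{\partial y}=\frac{\partial R}{\partial r}+\frac{\partial\Theta}{\partial\theta}+\frac{R}{r},
\]
the polar form of the planar divergence. Conceptually I would derive it from the fact that $\operatorname{div}(X)$ is the logarithmic rate of change of the area form along the flow; since in polar coordinates this form is $r\,dr\,d\theta$, one has
\[
\operatorname{div}(X)=\frac1r\Big(\frac{\partial(rR)}{\partial r}+\frac{\partial(r\Theta)}{\partial\theta}\Big)=\frac{\partial R}{\partial r}+\frac{\partial\Theta}{\partial\theta}+\frac{R}{r},
\]
using $\partial_\theta(r\Theta)=r\,\partial_\theta\Theta$ and $\partial_r(rR)=R+r\,\partial_rR$.

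To keep the argument self-contained I would also verify this identity by direct differentiation. Writing $P=P(r\cos\theta,r\sin\theta)$ and likewise for $Q$, one has $\partial_rP=P_x\cos\theta+P_y\sin\theta$ and $\partial_\theta P=r(-P_x\sin\theta+P_y\cos\theta)$, with the analogous formulas for $Q$. Substituting these into $\partial_rR$ and into $\partial_\theta\Theta$ and adding, the cross terms in $P_y$ and $Q_x$ cancel, while the $P_x$ and $Q_y$ terms assemble with $\cos^2\theta+\sin^2\theta=1$ to give $P_x+Q_y$. The one point demanding care — and the only real obstacle in the whole proof — is the explicit factor $1/r$ in the definition of $\Theta$: differentiating it in $\theta$ produces a spurious $-R/r$, which is precisely what the additive term $R/r$ in the statement cancels. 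Once this divergence identity is established, multiplying it by $sV$ and adding the advective part already computed yields the claimed polar expression for $M_s$; everything beyond the divergence bookkeeping is routine chain rule.
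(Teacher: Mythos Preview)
Your argument is correct. The paper actually states Lemma~\ref{le2} without proof, so there is nothing to compare against; your two-step decomposition (coordinate-independence of $\dot V$ for the advective term, and the polar divergence formula $\operatorname{div}X=\tfrac1r\partial_r(rR)+\partial_\theta\Theta$ for the second term) is exactly the natural route and is carried out cleanly. One small wording issue: the ``spurious'' $-R/r$ does not come from differentiating the factor $1/r$ itself (which is constant in $\theta$), but from the product-rule terms $-Q\sin\theta-P\cos\theta=-R$ combined with the $1/r$ prefactor; your computation handles this correctly even if the sentence describing it is slightly imprecise.
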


\begin{theorem}\label{MTpolars}
Consider the planar differential system~\eqref{ee1},
\begin{equation*}\Dot x=P(x,y),\,\quad\Dot y= Q(x,y), \end{equation*}
where  $P$ and $Q$ are  real polynomials of degree $n$ and $P(0,0)=Q(0,0)=0.$
Define the polynomial
\begin{equation*}
p(r^2):=\frac1{2\pi r}\int_0^{2\pi} R(r,\theta)\,d\theta,
\end{equation*}
where $R$ is given in~\eqref{eq3} and set $w(r)=r^2p'(r^2)$. Denote by $d$ the
degree of $w$ and by  $N^+$ its number of  non-negative roots. For each fixed
$s\in\mathbb{R}$ consider the function
\begin{align*}\label{m}
M_s(r,\theta):&=R(r,\theta)w'(r)+s\left(\frac{\partial R(r,\theta)}{\partial r}
+\frac{\partial \Theta(r,\theta)}{\partial \theta}+\frac{R(r,\theta)}{r}\right)w(r)\nonumber\\
&=:\sum_{i=1}^{n+d-1} m_{i}(s,\theta)r^i,
\end{align*}
and, for any $i\ge1,$ let $\mu_i(s)$ be such that
$\max_{\theta\in[0,2\pi]}m_i(s,\theta)\le \mu_i(s).$

 Then, if the polynomial
\[\Phi_s(r):=\sum_{i=1}^{n+d-1} \mu_i(s)r^i\] is negative   for all
$r\in(0,\infty),$ system~\eqref{ee1} has at most $N^+$  limit cycles and  all of
them are hyperbolic.
\end{theorem}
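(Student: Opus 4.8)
The plan is to feed Corollary~\ref{B} with the rotationally symmetric weight $V(x,y):=w\big(\sqrt{x^2+y^2}\,\big)=w(r)$ and to read the hypothesis ``$\Phi_s<0$'' as the sign condition on $M_s$ demanded there. First I would check admissibility. Since $p$ is a polynomial, setting $W(u):=u\,p'(u)$ gives $w(r)=W(r^2)$, so $V=W(x^2+y^2)$ is an honest polynomial and hence analytic on $\Rbb^2$, as required. Because $V$ depends on $r$ alone, Lemma~\ref{le2} simplifies: the term $\partial_\theta V\,\Theta$ drops out and $\partial_r V=w'(r)$, so the Cartesian $M_s$ of \eqref{ms} coincides with the polar function $M_s(r,\theta)$ displayed in the statement. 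This is the object whose sign I must control.

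Next I would establish that $\Phi_s<0$ forces $M_s<0$ off the origin. Writing $M_s(r,\theta)=\sum_{i=1}^{n+d-1}m_i(s,\theta)r^i$ and using that $r^i>0$ for $r>0$ together with $m_i(s,\theta)\le\mu_i(s)$, I obtain $M_s(r,\theta)\le\Phi_s(r)<0$ for all $\theta$ and all $r>0$. Hence $M_s<0$ on $\Rbb^2\setminus\{0\}$ and vanishes at most at the origin, a null set; in particular $M_s$ keeps a constant sign.

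Then comes the geometric bookkeeping, which I expect to be the crux. I take $\mathcal{W}=\Rbb^2$ and describe $\mathcal{V}_0=\{w(r)=0\}$: it is the union of the origin (note $w(0)=0\cdot p'(0)=0$, so the origin is always a root) and the concentric circles $\{r=\rho_j\}$ at the positive roots $0<\rho_1<\cdots<\rho_k$ of $w$. On each such circle $M_s=\langle\nabla V,X\rangle<0$ is nonzero, so the flow crosses it transversally; therefore no limit cycle lies inside $\mathcal{V}_0$ and every limit cycle sits in a connected component of $\Rbb^2\setminus\mathcal{V}_0$. Those components are the punctured disk $\{0<r<\rho_1\}$, the $k-1$ annuli between consecutive circles, and the exterior $\{r>\rho_k\}$, each of which is $1$-connected; hence $\ell(\Rbb^2,V)=k+1$, precisely the number $N^+$ of non-negative roots of $w$ (the $k$ positive roots together with the one at the origin). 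Now Corollary~\ref{B} closes the argument: for $s<0$ the limit cycles avoiding $\mathcal{V}_0$ number at most $\ell(\Rbb^2,V)=N^+$, whereas for $s\ge0$ the bound is $\ell(\Rbb^2)=0\le N^+$; in all cases the total is at most $N^+$. Hyperbolicity is then automatic: every limit cycle avoids $\mathcal{V}_0$, so for $s\ne0$ Corollary~\ref{B} yields hyperbolicity, and for $s=0$ there are no cycles to worry about.

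The hard part is exactly this last step. One must be sure that $\Phi_s<0$ gives strict negativity of $M_s$ away from the origin, since it is precisely this strictness on $\mathcal{V}_0$ that makes the circles transversal lines without contact and thus prevents them from being periodic orbits; and one must run the connectivity count with care, noticing that the origin is unavoidably a zero of $w$ and so turns the innermost region into a punctured disk. That single observation is what aligns $\ell(\Rbb^2,V)$ with $N^+$ rather than with the number of positive roots alone.
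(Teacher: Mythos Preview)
Your proof is correct and follows essentially the same approach as the paper's: both apply Corollary~\ref{B} with $V=w(r)$, use the inequality $M_s(r,\theta)\le\Phi_s(r)<0$ to force the sign condition, and then count $\ell(\Rbb^2,V)=N^+$ from the fact that $\{w(r)=0\}$ consists of the origin together with $N^+-1$ concentric circles. Your write-up is somewhat more explicit (verifying analyticity of $V$, invoking Lemma~\ref{le2}, and treating the $s=0$ case for hyperbolicity separately), but the argument is the same.
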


\begin{proof}
We want to apply Corollary~\ref{B} to system \eqref{ee1}  with $V(x,y)=w(r)$ and
the value $s$ given in the statement of the Theorem. By hypothesis, we have
\[
 M_s=M_s(r,\theta)=\sum_{i=1}^{n+d-1} m_i(s,\theta)r^i\le
  \sum_{i=1}^{n+d-1} \mu_i(s)r^i=\Phi_{s}(r)<0
\]
for all $r\in(0,\infty).$ Notice that by the proof of Corollary~\ref{B} and because
$M_s$ does no vanish  there are no limit cycles in $\{w(r)=0\}.$ Hence the maximum
number of limit cycles is $\ell(\mathbb{R}^2)=0$ when $s\ge0$ and
$\ell(\mathbb{R}^2,w)$ if $s<0.$ In fact notice that $\{w(r)=0\}$ is formed by the
origin and $N^+-1$ disjoint concentric cercles. Therefore
$\ell(\mathbb{R}^2,w)=N^+$ and again by Corollary~\ref{B} the theorem follows.
\qed\end{proof}

\begin{remark}
(i) The choice of the function $V(x,y)=w(r)$ in Theorem~\ref{MTpolars} is motivated
by the following fact: for the  simple system that in polar coordinates writes as
$\dot r=rp(r^2), \,\dot\theta =q(r^2),$  where  $q$ is any arbitrary polynomial, it
holds that the corresponding $M_{-1}$, given in~\eqref{ms}, is always negative.

(ii) Following the proof of Corollary~\ref{B} it is not difficult to see that under
the hypotheses of the  Theorem~\ref{MTpolars}, if the system has only the origin as
a critical point then it has  at least $N^+-2$ limit cycles, with alternating
stability. The reason is that two consecutive circles of $\{w(r)=0\}$ always are
the boundaries of positive or negative invariant regions.
\end{remark}

We end this subsection with a concrete   application of Theorem~\ref{MTpolars} to
a 3-parameter family of planar vector fields. Consider the system
\begin{align}\label{eqex2}
  \dot x&=x(1-(x^2+y^2))(2-(x^2+y^2))-y+a x^2y +b x^2y^2,\nonumber\\
  \dot y&=x+y(1-(x^2+y^2))(2-(x^2+y^2))+cxy^2.
\end{align}
We will prove that if  $a,b$ and $c$ are such that
\[
\Psi_{a,b,c}(r):=-10+\frac94(|a|+|c|)+\frac94|b|r
+\left(12+|a|+|c|\right)r^2+|b|r^3-4r^4<0
\]
for all $r>0,$ then  system~\eqref{eqex2} has at most two (hyperbolic) limit
cycles. Moreover, when they exist, one is included in the disc
$\mathcal{D}:=\{x^2+y^2\le3/2\}$ and is stable and the other one is outside the
disc and it is  unstable.

To apply  Theorem \ref{MT} we compute $p(s)=2-3s+s^2.$ Then taking
 $w(r)=r^2p'(r^2)=r^2(-3+2r^2)$ and  $s=-1$, we obtain
\begin{align*}
M_{-1}(r,\theta)=&
\frac14\left(-40+a\left(6\sin(2\theta) -3\sin(4\theta)\right)+c\left(6\sin(2\theta) +3\sin(4\theta)\right)\right)r^4\\
&+\frac38b\left( 2\cos(\theta)-3\cos(3\theta)+\cos(5\theta)    \right)r^5\\
&+\left(12+a\sin(4\theta)-c\sin(4\theta)\right)r^6
-\frac{b}2\left(-\cos(3\theta)+\cos(5\theta)\right)r^7-4r^8.
\end{align*}
Hence, for the values of the parameters considered, we can prove that
 \[
M_{-1}(r,\theta)\le r^4\Psi_{a,b,c}(r)<0
\]
for all $r>0.$ Thus we can apply Theorem \ref{MT}.  Since $N^+=2$ we have proved
that  system~\eqref{eqex2}  has at most two (hyperbolic) limit cycles.

For instance the condition on $\Psi_{a,b,c}$ holds for $a=1/8$, $b=1/15$ and
$c=1/20$. Moreover for these parameters it is not difficult to prove, by using
resultants and the Sturm's theorem, that the origin is the unique critical point,
which is unstable. Finally, by studying the flow on $\{x^2+y^2=R^2\},$ for $R$ big
enough, and on $\{x^2+y^2=3/2\},$ we prove the existence of both limit cycles.

\section{More applications}
\label{sec:4}

This section contains an extension of a Massera's result extracted
from~\cite{GGiL} and a study of an extension of the van der Pol system introduced
in~\cite{Xian}.

\subsection{A generalization of a result of Massera}

Consider the generalized smooth second order Li\'{e}nard equation.
\begin{equation*}\label{a1}
\ddot x + f(x)\dot x+ g(x) = 0,
\end{equation*}
with $f$ and $g$ smooth functions. It can be written as the  planar system
\begin{equation}\label{a3}
\dot x = y, \qquad \dot y= -f(x)y-g(x).
\end{equation}
We define $G(x)=\int_0^x g(z)\,dz.$

Using once more the extended Bendixson-Dulac Theorem and its Corollary~\ref{B} we
can prove the following result.

    \begin{figure}[h]
\begin{center}
\begin{overpic}[scale=.53]{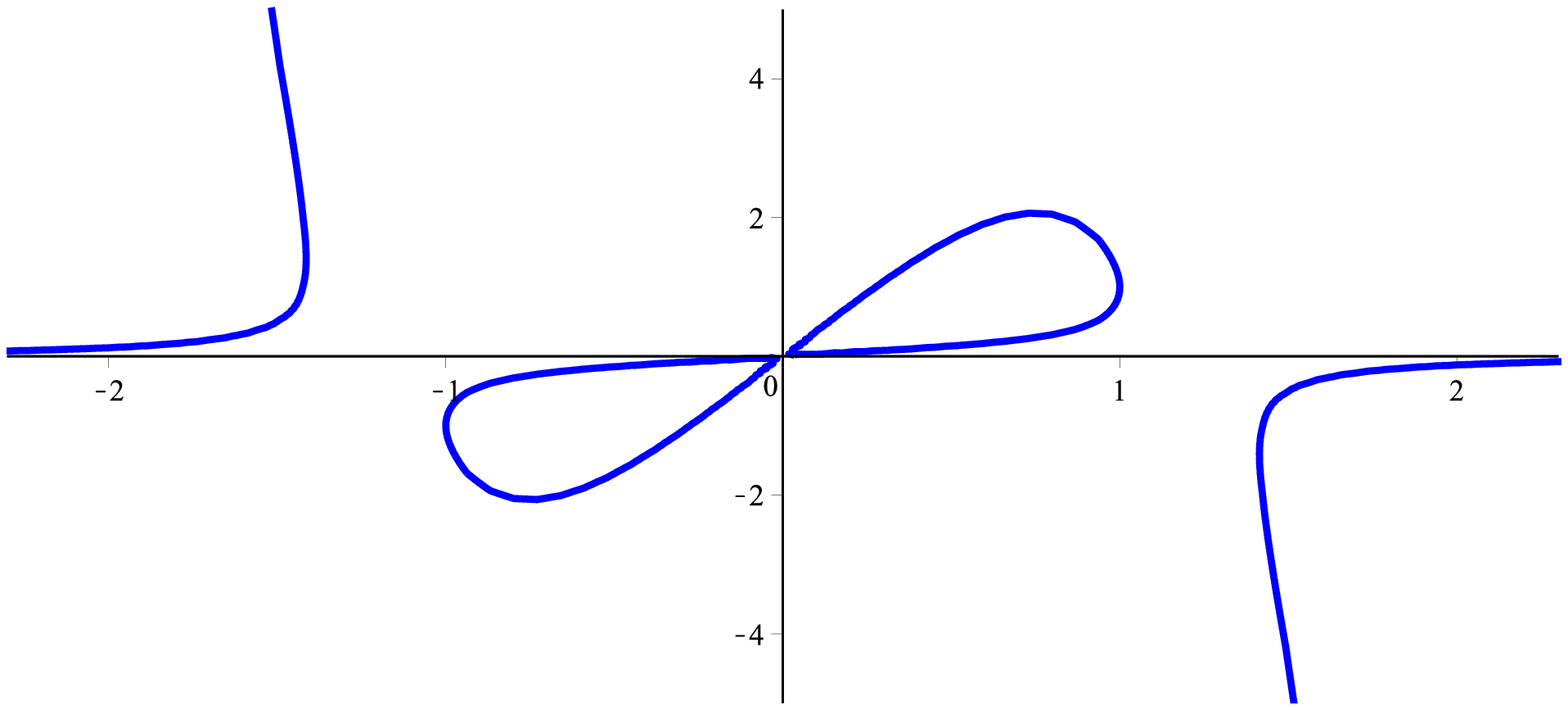}\put(97,23.5){$x$}\put(47.4,43.2){$y$}
\end{overpic}\vspace{-0.5cm}
\end{center}
\caption{Example of set $\{V=0\},$ under the hypotheses  of Proposition~\ref{cn2}.}
        \label{fig2}                
    \end{figure}

\begin{proposition}\label{cn2} Let $\mathcal{W}=\mathcal{I}\times \mathbb{R}$ be
a vertical strip  of\, $\mathbb{R}^2,$ where $\mathcal{I}$ is an open interval
containing the origin. Assume that   the functions $f$ and $g$ are of class
$\mathcal{C}^1,$ that $g$ only vanishes at the origin and that  $f+
2(f/g)^{\prime}G$ does not change sign on ${\mathcal I}$,vanishing only at $x=0.$
Then system \eqref{a3} has at most one periodic orbit which entirely lies in
$\mathcal W$, and when it exists it is a hyperbolic limit cycle.
\end{proposition}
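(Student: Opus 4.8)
The plan is to apply Corollary~\ref{B} with $s=-1$, so that the relevant Dulac function is $D=|V|^{-1}$ and the bound on the limit cycles not meeting $\{V=0\}$ is $\ell(\mathcal{W},V)$. To locate a good $V$ I would search among functions of the form $V(x,y)=\tfrac12 y^2+b(x)y+G(x)$, keeping $b(x)$ as a free coefficient. A direct computation with $P=y$, $Q=-f y-g$ and $\divv X=-f$ gives, in~\eqref{ms},
\[
M_{-1}(x,y)=\bigl(b'(x)-\tfrac12 f(x)\bigr)y^2-\bigl(b(x)g(x)-f(x)G(x)\bigr),
\]
where the linear-in-$y$ term cancels automatically precisely because $s=-1$. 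The key observation is that the choice $b(x)=f(x)G(x)/g(x)$ annihilates the $y$-free term and, after simplification of the remaining coefficient, yields $M_{-1}(x,y)=\tfrac12\bigl(f(x)+2(f/g)'(x)G(x)\bigr)y^2$. This is exactly the combination appearing in the hypothesis, so the sign control is handed to us for free.

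With this $V$ the sign analysis is immediate: $M_{-1}$ has constant sign on $\mathcal{W}=\mathcal{I}\times\mathbb{R}$ since $y^2\ge0$ and $f+2(f/g)'G$ has constant sign on $\mathcal{I}$, and it vanishes only on the null set $\{y=0\}\cup\{x=0\}$. Because $V(x,0)=G(x)$ and $V(0,y)=\tfrac12 y^2$, one gets $\{M_{-1}=0\}\cap\{V=0\}=\{(0,0)\}$, which contains no periodic orbit; hence limit cycles cannot cross $\{V=0\}$. I would also record that, although $b=fG/g$ looks singular at $x=0$, near the origin $G/g\to0$, so $b$ extends to a $\mathcal{C}^1$ function and $V\in\mathcal{C}^1(\mathcal{W})$, which legitimizes using $D=|V|^{-1}$ on each connected component of $\mathcal{W}\setminus\{V=0\}$.

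It then remains to count, and this topological step is where I expect the real difficulty to lie. Writing $V=\tfrac12(y+b)^2+\bigl(G-\tfrac12 b^2\bigr)$, the region $\{V<0\}$ is the $y$-tube $|y+b|<\sqrt{b^2-2G}$ over the set of $x$ where $b^2>2G$, that is, where $\Psi:=\tfrac{f}{g}\sqrt{G}$ satisfies $|\Psi|>\sqrt2$. I would rewrite the hypothesis as the identity $f+2(f/g)'G=2\sqrt{G}\,\tfrac{d}{dx}\!\bigl(\tfrac{f}{g}\sqrt{G}\bigr)$, so that its sign-definiteness is equivalent to $\Psi$ being monotone on each side of the origin. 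Monotonicity then forces $\{|\Psi|>\sqrt2\}$ to be a terminal subinterval reaching $\partial\mathcal{I}$ on each side, so that $\{V<0\}$ reduces to at most two notches abutting the boundary of the strip and $\{V=0\}$ contains no bounded oval; thus $c(\mathcal{W},V)=0$. Since, in the meaningful case, the origin is a focus (equivalently $V$ is positive definite there), it is the only isolated zero of $V$ and it is responsible for the single hole, giving $\ell(\mathcal{W},V)=1$. Corollary~\ref{B} then yields at most one limit cycle in $\mathcal{W}$; being of the type that avoids $\{V=0\}$ with $s=-1\ne0$, it is hyperbolic by the last assertion of that corollary.

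In summary, the computational core is painless once the coefficient $b=fG/g$ is guessed, and the substantive work is geometric: proving that the bad region $\{V<0\}$ can only appear as boundary-touching notches (so no closed ovals arise), and dealing with the degenerate configurations of the origin separately. These points, together with the $\mathcal{C}^1$-extension of $V$ across $x=0$, are the places where the argument needs genuine care.
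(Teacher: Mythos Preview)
Your choice of $V$ (which is the paper's $V$ divided by $2$) and the computation of $M_{-1}=\tfrac12\bigl(f+2(f/g)'G\bigr)y^2$ coincide with the paper's proof; this computational core is exactly the same and you have it right.

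Where you diverge is in the topological step, and there the paper's route is both shorter and more robust. You try to describe $\{V<0\}$ explicitly via the monotonicity of $\Psi=(f/g)\sqrt{G}$. That identity needs $G>0$ on $\mathcal{I}\setminus\{0\}$, which is not among the hypotheses (it follows only when $xg(x)>0$), and your ``meaningful case'' restriction to a focus at the origin is not justified by the statement either: in the node case the origin is not an isolated zero of $V$, so your count $\ell(\mathcal{W},V)=1$ has to be argued differently. The paper sidesteps all of this with a dynamical observation you do not use: since $M_{-1}$ does not change sign, the curve $\{V=0\}$ is without contact for the flow away from the origin; hence any closed oval of $\{V=0\}$ would have to enclose a critical point. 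But the only critical point is the origin, and no oval of $\{V=0\}$ can surround it because $\{V=0\}\cap\{x=0\}=\{(0,0)\}$ (from $V(0,y)=\tfrac12 y^2$). Therefore $\{V=0\}$ has no ovals at all, and $\ell(\mathcal{W},V)\le1$ is immediate. This lets the vector field itself control the level set and removes the case analysis and the extra positivity assumption your geometric argument needs.
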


\begin{proof}
By taking $V(x,y)= y^2+ (2Gf)y/g+ 2G$ and $s=-1$ we can compute $M_s$ given
in~\eqref{ms}, obtaining that
\[M_{-1}= \left(f+ 2G(f/g)^{\prime} \right)y^2.\]
Notice that $G/g$ and $(f/g)^{\prime}G$ are well--defined at the origin. By the
hypotheses, $M_-1$ does not change sign on $\mathcal W$  and
$\{M_{-1}=0\}=\{xy=0\}.$ Moreover this set does not contain periodic orbits. Hence
we can apply Corollary~\ref{B}. Since $s<0$ we have to compute
$\ell(\mathcal{W},V).$ The function $V$ has degree 2 in $y$ and when $x=0$ the
only point in $\mathcal{V}_0:=\{V=0\}$ is $(0,0)$.  Therefore the set
 $\mathcal{V}_0$ has no oval
surrounding the origin. Moreover, since the origin is the only critical point of
the system and  $\mathcal{V}_0$ is without contact by the flow of the system, then
$\mathcal{V}_0$  does not contain  ovals at all. In Figure~\ref{fig2} we illustrate
a possible set $\mathcal{V}_0$, taking $f(x)=-4+x^2+x^4$ and $g(x)=x.$ Then
$V(x,y)=y^2+(-4+x^2+x^4)xy+x^2$ and $M_{-1}(x,y)=2(1+2x^2)x^2y^2.$ Hence, in
general,  all the connected regions of $\mathcal{W}\setminus\mathcal{V}_0$ are
simply connected but one and $\ell(\mathcal{W},V)=1$. Thus  we have proved the
uniqueness of the limit cycle. \qed
\end{proof}

We remark that Proposition~\ref{cn2} when $g(x)=x,$ contains the following
classical result, which was proved by Massera \cite{Ma} and Sansone \cite{Sa}.

\medskip

\noindent{\bf Massera's Theorem.} {\it Consider the Li\'{e}nard differential system
\eqref{a3} with $g(x)= x$, $f(0)<0$ and $f'(x)x>0$ if $x\neq 0$. Then system
\eqref{a3} has at most one limit cycle}.

\subsection{A generalization of van der Pol equation}

The system
\begin{equation}\label{sisb}
\dot{x}=y,\quad \dot{y}=-x+(b^2-x^2)(y+y^3),
\end{equation}
is introduced and studied in~\cite{Xian} as a generalization of the van der Pol
equation. In the papers~\cite{Han-Qian,Xian} it is proved that it has at most one
(hyperbolic) limit cycle and that it exists if and only if $b\in(0,b^*)$ for some
$0<b^*< \sqrt[6]{9\pi^2/16})\approx 1.33.$ This bifurcation value is refined in
\cite{GGG}, proving that $b^*\in(0.79,0.817)$.  In fact, numerically it can be seen
that $b^*\approx 0.80629$. In this section we will prove the uniqueness of the
limit cycle when $b\in(0,0.6]$ using a suitable Bendixson-Dulac function. This idea
is developed in~\cite{GGG} where the authors prove, with the same method, the
uniqueness and hyperbolicity of  the limit cycle holds when $b\in(0,0.817)$  and
its non-existence when $b\in[0.817,\infty).$

To give an idea of how we have found the function $V$ and the value $s$ to find the
function $M_s$ that we will use in our proof we  first study again the van der Pol
system. As we will see the main difficulty of this example is that  the function
$M_s$ is a function of two variables.

\subsubsection{The van der Pol equation (a second approach)}

The  van der Pol equation studied in Subsection~\ref{svdp}, after a rescaling of
variables, is equivalent to the system
\begin{equation}\label{vanderpol}
\dot{x}=y,\quad \dot{y}=-x+(b^2-x^2)y.
\end{equation}
Arguing like in Section~\ref{ss1} it is natural to start considering functions of
the form
\[
V(x,y)=v_2y^2+v_1(x)y+v_0(x),
\]
with $s=-1$. Then the corresponding $M_{-1}$ given in \eqref{ms} is a polynomial
of degree 2 in $y$, with coefficients being functions of $x$. In particular the
coefficient of $y^2$ is
\[
v_1'(x)+v_2(b^2-x^2).
\]
Taking $v_1(x)=(x^2-3b^2)v_2x/3$ we get that this coefficient vanishes. Next,
fixing $v_2=6$, and imposing to the coefficient of $y$ to be zero we obtain that
$v_0(x)=6x^2+c,$ for any constant $c.$ Finally, taking $c=b^2(3b^2-4),$ we arrive
to
\begin{equation}\label{vbvander}
V(x,y)=6y^2+2(x^2-3b^2)xy+6x^2+b^2(3b^2-4).
\end{equation}
Then
$$M_{-1}(x,y)=4x^4+b^2(3b^2-4)(x^2-b^2).$$

It is easy to see that for $b\in(0,2/\sqrt3)\approx(0,1.15)$, $M_{-1}(x,y)>0$.
Hence we can apply Corollary~\ref{B}. As $V(x,y)$ is quadratic in $y$, $V(x,y)=0$
has at most one oval, see Figure~\ref{f3} for $b=1.$  Hence
$\ell(\mathbb{R}^2,V)=1$ and we have proved the uniqueness and hyperbolicity of the
limit cycle when $b<2/\sqrt3.$ Recall that the proof given in Subsection~\ref{svdp}
is simpler and valid for all values of the parameter. We have included this one as
a motivation for the construction of the function $V(x,y)$ used to study
system~\eqref{sisb}.

    \begin{figure}[h]
\begin{center}
\begin{overpic}[scale=.45]{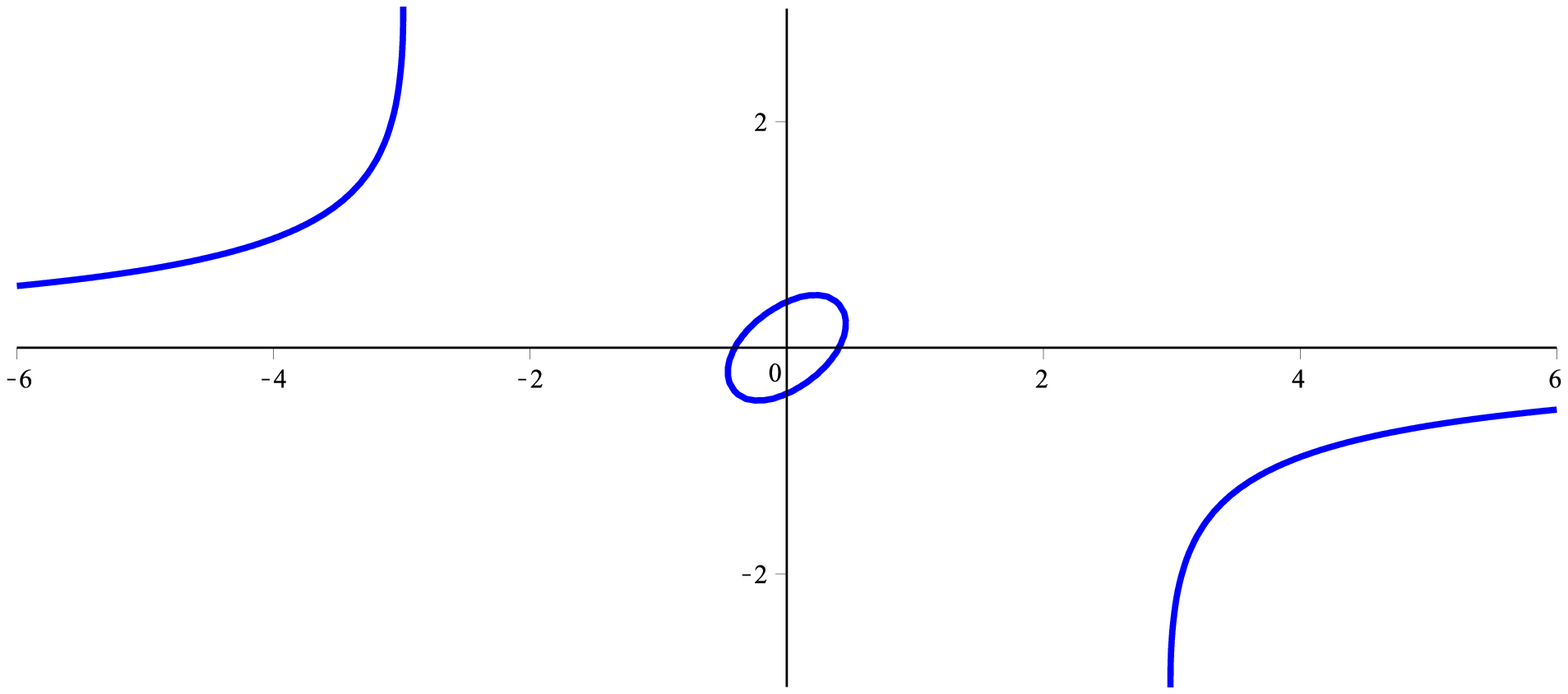}\put(97,23.5){$x$}\put(47,42){$y$}
\end{overpic}\vspace{-0.5cm}
\end{center}
\caption{The algebraic curve $V(x,y)=0$ with $b=1$.}
        \label{f3}                
    \end{figure}

\subsection{System~\eqref{sisb} with $b\le 0.6$}\label{651}

By making some modifications to the function $V$ given by \eqref{vbvander}, we
propose the following function $V,$
\begin{align}\label{vv}
V(x,y)=&\big(2x^3+6b^2(1-b^2)x\big) y^3+6(1-b^2)y^2+2(x^2-3b^2)xy\nonumber\\&+
6(1-b^2)x^2+b^2(3b^2-4)
\end{align}
and again $s=-1$. Some computations give that
\begin{equation*}\label{Mb651}
\begin{array}{lll}
M_{-1}(x,y)&=&6\big((2-3b^2)x^4y^2-2b^2(2-b^2)x^3y^3+(2-b^2)x^2y^4\big)
+2(2-3b^2)x^4\\
&&-3b^2(14-15b^2)x^2y^2+12b^4(2-b^2)xy^3-b^2(4-9b^2)x^2\\
&&+3b^4(2-3b^2)y^2+b^4(4-3b^2).
\end{array}
\end{equation*}

Then we need to study the shape of the connected components of set
$\mathbb{R}^2\setminus\{V=0\}$ and the sign of  $M_{-1}(x,y)$. It can be seen that
the  algebraic curve $V(x,y) = 0,$ with $V(x,y)$ given by~\eqref{vv}, has no
singular points and at most one closed oval for $b\in(0.0.85]$. Moreover
$M_{-1}(x,y)$ does not vanish for $b\in(0,0.651)$. Hence we can apply again
Corollary~\ref{B}. Since $\ell(\mathbb{R}^2,V)=1$ we have proved the uniqueness and
hyperbolicity of the limit cycle for $b\le0.6$ (in fact for $b\le 0.651$).

The  tools used to prove  the above  assertions are given in~\cite{GGG}. Among
other methods the authors use discriminants, double discriminants, Sturm sequences
and the study of the points at infinity of the algebraic curves.


\begin{acknowledgement}
The first author is supported by the MICIIN/FEDER grant number MTM2008-03437 and
the Generalitat de Catalunya grant number 2009SGR410
\end{acknowledgement}


\begin{thebibliography}{99.}%


\bibitem{a} M. J. \'{A}lvarez, A. Gasull, H.  Giacomini,
\emph{A new uniqueness criterion for the number of
 periodic orbits of Abel equations}, J. Differential Equations \textbf{234},
161--176  (2007).

\bibitem{a2} M. J. \'{A}lvarez, A. Gasull, R. Prohens,
\emph{Limit cycles for two families of cubic systems},
 Nonlinear Anal. \textbf{75},  6402--6417 (2012).





\bibitem{cha} M. Chamberland, A. Cima, A. Gasull, F. Ma\~{n}osas. \emph{
Characterizing asymptotic stability with Dulac functions}, Discrete Contin. Dyn.
Syst. \textbf{17}, 59--76 (2007).

\bibitem{Che1} L. A. Cherkas, \emph{Estimation of the number
of limit cycles of autonomous systems}, Differential Equations \textbf{13},
529--547 (1977).

\bibitem{Che2} L. A. Cherkas, \emph{Dulac function for
polynomial autonomous systems on a plane}, Differential Equations \textbf{33},
692--701 (1997).

\bibitem{Che3} L. A. Cherkas, A. A. Grin', \emph{A second-degree polynomial
Dulac function for a cubic system on the plane}, Differential Equations
\textbf{33}, 1443--1445 (1997).

\bibitem{Che4} L. A. Cherkas, A. A. Grin', \emph{ A Dulac function in a half-plane
in the form of a polynomial of the second degree for a quadratic system},
Differential Equations \textbf{34}, 1346--1348 (1998).

\bibitem{Che5} L. A. Cherkas, A. A.  Grin', K. R.  Schneider,
\emph{Dulac-Cherkas functions for generalized Li\'{e}nard systems},
 Electron. J. Qual. Theory Differ. Equ. \textbf{35}, 23 pp. (2011).

\bibitem{Che6} L. A. Cherkas, A. A. Grin', \emph{On the Dulac function
for the Kukles system}, Differerential Equations \textbf{46},  818--826 (2010).

\bibitem{Che7} L. A. Cherkas, A. A. Grin', \emph{A function of limit cycles of
the second kind for autonomous functions on a cylinder},  Differential Equations
\textbf{47}, 462--470 (2011).


\bibitem{Chi}  C. Chicone, \emph{Ordinary differential equations with
applications}, Second edition, Texts in Applied Mathematics, 34. Springer, New
York, 2006.

\bibitem{CGM} A. Cima, A. Gasull, F. Ma\~{n}osas, \emph{Limit cycles for vector
fields with homogeneous components},  Appl. Math. (Warsaw) \textbf{24}, 281--287
(1997).


\bibitem{Conti} R. Conti, \emph{Soluzioni periodiche dell'equazione di Li\'enard
generalizatta. Esistenza ed unicit\`{a}}, Bolletino della Unione Matematica Italiana
\textbf{3}, 111--118 (1952).

\bibitem {DLA} { F. Dumortier, J. Llibre and J.C. Art\'{e}s},
{\it Qualitative theory of planar differential systems}, UniversiText,
Springer--Verlag, New York, 2006.

%


\bibitem{F}  M. Fe{\v{c}}kan, \emph{A generalization of Bendixson's criterion},
Proc. Amer. Math. Soc. \textbf{129}, 3395--3399 (2001).


\bibitem{GGG} {J. D. Garc\'{\i}a-Salda\~{n}a, A. Gasull, H. Giacomini},
\emph{Bifurcation values for a family of planar vector fields of degree five},
preprint 2012.


\bibitem{GGi} { A. Gasull, H. Giacomini},
\emph{ A new criterion for controlling the number of limit cycles of some
generalized Li\'{e}nard equations}, J. Differential Equations \textbf{185},  54–-73
(2002).

\bibitem{GGi2} { A. Gasull, H. Giacomini},
\emph{Upper bounds for the number of limit cycles through linear differential
equations}, Pacific J. Math. \textbf{226},  277-–296 (2006).

\bibitem{GGi3} { A. Gasull, H. Giacomini},
\emph{Upper bounds for the number of limit cycles of some planar polynomial
differential systems}, Discrete Contin. Dyn. Syst. \textbf{27}, 217--229 (2010).



\bibitem{GGiL} { A. Gasull, H. Giacomini, J. Llibre},
\emph{New criteria for the existence and non-existence of limit cycles in Li\'{e}nard
differential systems}, Dyn. Syst. \textbf{24},  171--185 (2009).


\bibitem{GG} A. Gasull, A. Guillamon, \emph{Non-existence, uniqueness of
limit cycles and center problem in a system that includes predator-prey systems
and generalized Li\'enard equations}, Diff. Equations and Dynamical Systems
\textbf{3}, 345--366 (1995).




\bibitem{Han-Qian} M. Han, T. Qian, \emph{ Uniqueness of periodic
solutions for certain second-order equations}, Acta Math. Sin. (Engl. Ser.) {\bf
20},  247--254 (2004).

\bibitem{siam} S.B. Hsu, T.W. Huang, \emph{Global
stability for a class of predator-prey systems}  \textbf{55}, 763--783 (1995).


\bibitem{I}Yu.  Ilyashenko, \emph{Centennial history of
Hilbert's 16th problem}, Bull. Amer. Math. Soc. (N.S.) \textbf{39}, 301–-354
(2002).



\bibitem{K} Y. Kuang,
\textit{Global stability of Gause-type predator-prey systems}, J. Math. Biol.
\textbf{28}, 463--474 (1990).

\bibitem{LM} Y.  Li, J. S. Muldowney,
\textit{On Bendixson's criterion}, J. Differential Equations \textbf{106},
 27--39 (1993).


\bibitem{Lloyd} N. G. Lloyd, \emph{A note on the number of
limit cycles in certain  two-dimensional systems}, J. London Math. Soc. (2)
\textbf{20},   277--286 (1979).



\bibitem{Ma} { J.L. Massera},
{\emph Sur un th\'{e}or\`{e}me de G. Sansone sur l'\'{e}quation di Li\'{e}nard} (French), Boll.
Un. Mat. Ital. (3) {\bf 9}, 367--369 (1954).

\bibitem{MM} C. C.  McCluskey, J. S.  Muldowney, James S.
\emph{Bendixson-Dulac criteria for difference equations}, J. Dynam. Differential
Equations \textbf{10},  567--575 (1998).

\bibitem{Moreira} H.N. Moreira, \emph{On Li\'enard's
equation and the uniqueness of limit cycles in predator-prey systems}, J. Math.
Biol. \textbf{28},   341--354 (1990).




\bibitem{Per}L. M. Perko, ``Differential equations and dynamical systems",
 Third edition. Texts in Applied Mathematics, \textbf{7}. Springer-Verlag, New York, 2001.


\bibitem{Sa} { G. Sansone},
{\emph Soluzioni periodiche dell'equazione di Li\'{e}nard. Calcolo del periodo}
(Italian), Univ. e Politecnico Torino. Rend. Sem. Mat. {\bf 10}, 155--171 (1951).



\bibitem{SC} G. Sansone, R.  Conti, ``Equazioni differenziali non lineari
(Italian)'', \emph{Edizioni Cremonese, Roma}, 1956.



\bibitem{Xian} X. Wang, J. Jiang, P. Yan, \emph{ Analysis of global
bifurcation for a class of systems of degree five}, J. Math. Anal. Appl. {\bf
222},  305--318 (1998).



\bibitem{Wilson} G. Wilson, \emph{Hilbert's sixteenth problem}, Topology
\textbf{17}, 53--73 (1978).

\bibitem{Yamato} K. Yamato, \emph{An effective method of counting the
number of limit cycles}, Nagoya Math. J. \textbf{76}, 35--114  (1979).

\bibitem{Ye} Yan Qian Ye \& others, ``Theory of limit cycles'', Translations of
Mathematical Monographs \textbf{66}. American Mathematical Society, Providence,
RI, 1986.



\bibitem{Z2}
 Zhi Fen Zhang \& others, ``Qualitative theory of
differential equations'', Translations of Mathematical Monographs \textbf{101}.
American Mathematical Society, Providence, RI, 1992.








%
\end{thebibliography}
\end{document}